\newcommand{\citecomment}[2][]{\citen{#2}#1\citevar}
\newcommand{\citeone}[1]{\citecomment{#1}}
\newcommand{\citetwo}[2][]{\citecomment[,~#1]{#2}}
\newcommand{\citevar}{\@ifnextchar\bgroup{;~\citeone}{\@ifnextchar[{;~\citetwo}{]}}}
\newcommand{\citefirst}{\@ifnextchar\bgroup{\citeone}{\@ifnextchar[{\citetwo}{]}}}
\newcommand{\cites}{[\citefirst}
\newcommand{\op}{\operatorname}
\newcommand{\card}[1]{\operatorname{card}#1}
\newcommand{\pivot}[1]{\operatorname{PivInd}#1}
\newcommand{\pivots}[1]{\operatorname{PivInds}#1}
\newcommand{\chainpivot}[2]{\operatorname{Pivot}_{#1}#2}
\newcommand{\Flow}{\operatorname{F}}
\newcommand{\IP}[2]{\langle#1,#2\rangle}
\newcommand{\supp}[2]{\op{supp}_{#1}#2}
\newcommand{\spann}[1]{\op{span}#1}
\newcommand{\chainbasis}[1]{\Sigma_{#1}}
\newcommand{\pivotentry}[1]{\op{PivEnt}#1}
\newcommand{\VMatrix}{S}
\newcommand{\descpx}[3]{D_{#3}(#1,#2)}
\newcommand{\descpxVC}[2]{D(#2,#1)}
\newcommand{\descpxV}[1]{D(#1)}
\newcommand{\descpxf}[2]{D_{#2}(#1)}
\newcommand{\sublevel}[2]{S_{#2}(#1)}
\newcommand{\sublevelopen}[2]{S^{<}_{#2}(#1)}
\newcommand{\gradbound}[2]{\partial_{#1}#2}
\newcommand{\spacegradientfacets}[1]{L_{#1}}
\newcommand{\projfacets}[1]{\pi_{#1}}
\newcommand{\critsubset}{C}
\newcommand{\critsubsetparameter}[3]{\op{Crit}_{#1}(#2,#3)}
\newcommand{\gradres}[2]{{#1}_{#2}}
\newcommand{\del}[2][]{\op{Del}_{#1}(#2)}
\newcommand{\delo}[2][]{\op{Del}\!^<_{#1}(#2)}
\newcommand{\wrap}[2][]{\op{Wrap}_{#1}(#2)}
\newcommand{\cech}[2][]{\op{\text{\v{C}}ech}_{#1}(#2)}
\newcommand{\cecho}[2][]{\op{\text{\v{C}}ech}\!^<_{#1}(#2)}
\newtheorem{thmx}{Theorem}
\newtheorem{corx}[thmx]{Corollary}
\newcommand\vcent[1]{\begingroup
\setbox0=\hbox{#1}\parbox{\wd0}{\box0}\endgroup}
\title{Wrapping Cycles in Delaunay Complexes: Bridging Persistent Homology and Discrete Morse Theory}
\titlerunning{Wrapping Cycles in Delaunay Complexes}
\author{Ulrich Bauer}{Department of Mathematics and Munich Data Science Institute,\newline Technische Universität München, Germany \and
\url{www.ulrich-bauer.org}}{mail@ulrich-bauer.org}{https://orcid.org/0000-0002-9683-0724}{}%
 \author{Fabian Roll}{Department of Mathematics, Technische Universität München, Germany \and \url{https://www.roll.science}}{fabian.roll@tum.de}{https://orcid.org/0000-0002-3604-4545}{}%
\authorrunning{U. Bauer and F. Roll}
\keywords{persistent homology, discrete Morse theory, apparent pairs, Wrap complex, lexicographic optimal chains, shape reconstruction} %
\begin{document}
\maketitle
\begin{abstract}
We study the connection between discrete Morse theory and persistent homology in the context of shape reconstruction methods.
Specifically, we consider the construction of Wrap complexes, introduced by Edelsbrunner as a subcomplex of the Delaunay complex, and the construction of lexicographic optimal homologous cycles, also considered by Cohen–Steiner, Lieutier, and Vuillamy in a similar setting.
We show that for any cycle in a Delaunay complex for a given radius parameter, the lexicographically optimal homologous cycle is supported on the Wrap complex for the same parameter, thereby establishing a close connection between the two methods.
We obtain this result by establishing a fundamental connection between reduction of cycles in the computation of persistent homology and gradient flows in the algebraic generalization of discrete Morse theory.
\end{abstract}

\vspace{3em}

\section{Introduction}

Reconstructing shapes and submanifolds from point clouds is a classical topic in computational geometry.
Starting in the 2000s, several key results have been achieved \cite{MR1318786,MR1845986,MR1885501,MR2038483,MR2267420,MR2298361,MR2485260,MR3148657,MR2383768}, culminating in theoretical homeomorphic reconstruction guarantees for a method based on the Delaunay triangulation \cite{MR2298361}.
The method is theoretical in nature, and their practical applicability is hindered by complexity and robustness issues.
A major challenge is caused by \emph{slivers} \cite{MR1865932}, which are simplices in the Delaunay triangulation with small volume but no short edges, and which have to be handled explicitly.
In contrast, several related Delaunay-based methods have proven highly robust and successful in practice, in particular, Morse-theory based methods such as \emph{Wrap} and related constructions \cite{MR2038483,MR2460367,MR2485260,DBLP:journals/cgf/Sadri09,MR3605986, DBLP:journals/tog/PortaneriRHCA22} and homological methods based on minimal cycles \cite{cohensteiner:hal-02391190,MR4517098,MR4598045,MR4470887,vuillamy:tel-03339931}.
The latter methods produce \emph{water-tight} surfaces (that is, boundaries of solids) by construction, gracefully circumventing the issue of slivers.
On the other hand, the \emph{Wrap complex} \cite{MR3605986} is always homotopy-equivalent to a union of balls of a given radius, but may still contain critical sliver simplices. It is therefore desirable to identify a subcomplex that is free from slivers.

Even though the Morse-theoretic and the homological method seem closely related in spirit, up to now the connection between them has not been fully explored.
While the development of Wrap preceded the introduction of Forman's discrete Morse theory \cite{MR1612391}, it has since been redeveloped within this framework \cite{MR3605986}, which provides powerful tools for proving geometric and topological properties of this construction.
A bridge from Wrap to homology-based methods advances our understanding of their geometric and topological properties, potentially paving the way for reconstruction guarantees that extend beyond surfaces.
Indeed, a synthesis of Wrap, discrete Morse theory, and persistent homology in the context of surface reconstruction has been envisioned already in Edelsbrunner's original paper describing the Wrap algorithm \cite{MR2038483}.

\subparagraph*{Contributions} 

The objective of this paper is twofold. 
First, we provide a tight link between persistent homology and discrete Morse theory, unifying the central notions of persistence pairs and of gradient pairs in a common framework.
Despite various results connecting both theories (see, e.g., \cite{MR2872542,MR3090522,MR4298669}), this kind of interface has been missing from the literature.
Using a specific refinement of the sublevel set filtration of a discrete Morse function, we demonstrate how the persistence pairs yield an algebraic gradient that contains as a subset the gradient pairs of the discrete Morse function, and such that the corresponding algebraic gradient flow can be viewed as a variant of the reduction algorithm for computing persistent homology (see \cref{algmorse-persistence}).
Second, we use these insights to establish a strong connection between the aforementioned Morse-theoretic and homological approaches to shape reconstruction.
Our main result (specializing \cref{inv-chain-supp-on-desc-cpx} to the Delaunay radius function, see \cref{ex:cech-del}), shows that the lexicographically minimal cycles (see \cref{def:lexoptimal}) in a Delaunay complex are all supported on the corresponding Wrap complex (see \cref{{def:desc-cpx}}), as illustrated in~\cref{fig:2dreduction}.
In the statement of the theorem, the order on simplices follows the convention outlined in \cref{discrete-morse-apparent-pairs}, and homology is considered with coefficients in a specified field.
\begin{thmx}
  \label{theorem-a}
Let $X\subset \mathbb{R}^d$ be a finite subset in general position, let $r \in \mathbb R$, and let $h \in H_*(\del[r]{X})$ be a homology class of the Delaunay complex $\del[r]{X}$.
Then the lexicographically minimal cycle of $h$, with respect to the Delaunay-lexicographic order on the simplices, is supported on the Wrap complex $\wrap[r]{X}$.
\end{thmx}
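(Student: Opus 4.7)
The approach is to obtain \cref{theorem-a} as the specialization of the paper's central technical result, \cref{inv-chain-supp-on-desc-cpx}, to the Delaunay radius function. The first step is to instantiate the discrete Morse framework of \cref{discrete-morse-apparent-pairs} on the Delaunay complex $\del{X}$: the apparent-pair construction applied to the radius function yields a discrete gradient whose critical simplices at parameter $r$ are, by \cref{def:desc-cpx} together with \cref{ex:cech-del}, exactly the simplices of the Wrap complex $\wrap[r]{X}$. Consequently, \cref{theorem-a} reduces to showing that the lexicographically minimal cycle of any $h \in H_*(\del[r]{X})$ is supported on the critical simplices of this gradient.

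To deduce this, I would invoke the algebraic gradient flow of \cref{algmorse-persistence}, the reinterpretation of persistence reduction developed earlier in the paper. Given any representative $z$ of $h$, the flow iteratively cancels each non-critical top-of-pair simplex $\tau$ appearing in $z$ by adding a multiple of the boundary of its Morse partner. With the Delaunay-lexicographic order chosen to refine the filtration, every such cancellation removes $\tau$ and introduces only simplices that are strictly smaller in lex order, so the process terminates in a cycle $z'$ supported entirely on critical simplices. An exchange argument then identifies $z'$ as the lex minimum in its class: any homologous chain would differ from $z'$ by a boundary, and running the same flow on that boundary would merely undo itself, showing that no representative can be lex-smaller than $z'$.

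The main obstacle I anticipate is verifying the precise compatibility of the Delaunay-lexicographic order with the apparent-pairs gradient. Concretely, one has to check that for every apparent pair $(\sigma,\tau)$ the simplex $\tau$ is the lex-maximal coface of $\sigma$ of equal filtration value, and dually that $\sigma$ is the lex-minimal facet of $\tau$ of equal filtration value; only with this matching does each flow step monotonically decrease the leading simplex, which is what drives both termination of the flow and the minimality of $z'$. This is exactly the bookkeeping that the paper's general result \cref{inv-chain-supp-on-desc-cpx} is designed to encapsulate, so once the compatibility has been pinned down, the specialization to the Delaunay setting reduces to a direct application of that theorem.
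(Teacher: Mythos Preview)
Your top-level reduction is exactly the paper's: \cref{theorem-a} is the specialization of \cref{inv-chain-supp-on-desc-cpx} to the Delaunay radius function $r_X$, with \cref{ex:cech-del} identifying $\descpxf{r_X}{r}=\wrap[r]{X}$. Where your sketch goes wrong is in unpacking what that theorem says. The Wrap complex is \emph{not} the set of critical simplices; by \cref{def:desc-cpx} it is the descending complex, the union of all gradient-partition intervals lying below some critical interval in the poset~$\hat V$, and it typically contains many regular simplices. Your reformulation ``the lex-minimal cycle is supported on the critical simplices'' is therefore stronger than the actual claim and in general false: by \cref{lexmin-iff-nobirthspx} the lex-minimal cycle may contain death elements of zero-persistence apparent pairs, and these are gradient cofacets, hence non-critical.

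This misreading propagates into your description of the flow. For a cycle $z$, the flow $\Phi(z)=z+\partial F(z)$ eliminates gradient \emph{facets} (the lower element $a$ of a pair $(a,b)$, a birth element) by adding a multiple of $\partial b$; it does not cancel ``top-of-pair'' simplices, whose Morse partner sits one degree too low for that to even make sense. After stabilization one obtains a cycle free of gradient facets (\cref{phiinv-equ-nogradientfacets}), supported on critical elements \emph{and} gradient cofacets, which is exactly why the conclusion lands on the descending complex rather than on the critical set. The paper's actual argument for \cref{inv-chain-supp-on-desc-cpx} runs: lex-minimal $\Rightarrow$ invariant under the reduction flow (\cref{invpivotflow-equ-lexminsimplicial}) $\Rightarrow$ invariant under the apparent-pairs subflow (\cref{apparent-pairs-contained-reduction-gradient,comp-inv-chains-ref,apparentpairs-flow-differentbasis}) $\Rightarrow$ supported on $\descpxV{W_r}$ (\cref{flowinf-supported-descending}) $\subseteq \descpxf{f}{r}$ (\cref{refinement-implies-nested-descendingcpxV}). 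Your ``compatibility check'' in the last paragraph is also off target: for apparent pairs, $\sigma$ being the \emph{maximal} facet of $\tau$ and $\tau$ the \emph{minimal} cofacet of $\sigma$ is the definition (\cref{def:app-pairs}), not something to be verified, and the orientations you state are reversed.
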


\begin{figure}[t]
    \vcent{\includegraphics[height=.3\textwidth]{bunny-delaunay-critical.pdf}}
  \hfill
    \vcent{\includegraphics[height=.3\textwidth]{bunny-wrap.pdf}}
  \hfill
    \vcent{\includegraphics[height=.3\textwidth]{bunny-contour-delaunay-reduction.pdf}}
  \caption{Left: Delaunay triangulation of a point cloud, with critical simplices highlighted.
  Middle: Wrap complex for a small radius parameter.
  Right: lexicographically minimal cycle for the most persistent feature (black contour), shown together with its bounding chain (shaded blue).}
  \label{fig:2dreduction}
  \end{figure}

As a consequence of our main result, we obtain the following connection between Wrap complexes and persistent homology (see \cref{prelim-pers-hom}) of the Delaunay filtration, specializing \cref{fullyreduced-supported-descending} to the Delaunay radius function. 
Our result states that the minimal cycle homologous to the boundary of a simplex killing homology in the Delaunay filtration -- obtained by a variant of the familiar matrix reduction algorithm for computing persistence -- is supported on the Wrap complex for the birth value of the corresponding feature.
    \begin{corx}
      \label{corollary-b}
      Let $X\subset \mathbb{R}^d$ be a finite subset in general position and $(\sigma,\tau)$ a non-zero~persistence pair of the lexicographically refined Delaunay filtration.
      Let $r=r_X(\sigma)$ and $s=r_X(\tau)$ be the radius of the smallest empty circumsphere of $\sigma$ and of $\tau$, respectively.
      Then the lexicographically minimal cycle of $[\partial\tau]$ in the Delaunay complex $\delo[s]{X}$, given as the column $R_\tau$ of the totally reduced filtration boundary matrix, is supported on the Wrap complex $\wrap[r]{X}$.
    \end{corx}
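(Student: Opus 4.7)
The plan is to extract the corollary from \cref{theorem-a} (more precisely, from its sharper parametrized form \cref{inv-chain-supp-on-desc-cpx}) by identifying the reduced column $R_\tau$ as the lexicographically minimal cycle in a suitable homology class.

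I would start by unpacking the totally reduced filtration boundary matrix. Since $(\sigma,\tau)$ is a persistence pair, the pivot of $R_\tau$ is $\sigma$ after total reduction. The column $R_\tau$ is obtained from $\partial\tau$ by adding multiples of earlier reduced columns $R_{\tau'}$ with $\tau' < \tau$ in the filtration order; because every such $\tau'$ and all simplices appearing in $R_{\tau'}$ precede $\tau$ in the filtration, the simplices in the support of $R_\tau$ all have filtration value at most $r = r_X(\sigma)$. Hence $R_\tau$ is already a chain in $\del[r]{X} \subseteq \delo[s]{X}$. Furthermore, $\partial R_\tau = 0$: the boundary contribution of $\sigma$ has been cleared because $\sigma$ is itself a pivot in some other reduced column, and the image of $R_\tau$ under $\partial$ cannot have a pivot strictly above $\sigma$ without contradicting the definition of the pivot. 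Thus $R_\tau$ is a cycle, homologous to $\partial\tau$ in $\delo[s]{X}$.

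The second step is to recognize $R_\tau$ as the lexicographically minimal representative of $[\partial\tau]$ in $H_*(\delo[s]{X})$. For this I would invoke the dictionary between persistence matrix reduction and algebraic gradient flow established earlier in the paper (cf.\ \cref{algmorse-persistence}), which exhibits the totally reduced column as the image of $\partial\tau$ under the algebraic gradient flow, whose invariant chains are precisely the lex-minimal cycles.

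Having established that $R_\tau$ is the lex-minimal cycle of its homology class and is supported in $\del[r]{X}$, an application of \cref{inv-chain-supp-on-desc-cpx} at the parameter $r$ concludes that its support lies in the Wrap complex $\wrap[r]{X}$. The main obstacle is the second step: verifying that total reduction genuinely yields the lex-minimal cycle within the homology class, as opposed to the lex-minimal representative merely within the coset of filtration-earlier boundaries, hinges on the gradient-flow framework developed in the main body of the paper, which is precisely what licenses passing from the coset-minimality manifest in the reduction algorithm to homology-minimality inside $\delo[s]{X}$.
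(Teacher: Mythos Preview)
Your overall strategy is the paper's: show that $R_\tau$ is the lexicographically minimal cycle of $[\partial\tau]$ in $\delo[s]{X}$, observe that it already lies in $\del[r]{X}$ (because its pivot is $\sigma$---not merely because the columns $\tau'$ used in the reduction precede $\tau$, which would only give value $\leq s$), and then invoke \cref{inv-chain-supp-on-desc-cpx} at parameter~$r$. Two simplifications relative to your sketch: $\partial R_\tau=0$ is immediate from $R_\tau=D\cdot S_\tau$ and $\partial^2=0$ (your pivot argument here is confused), and for lex-minimality the paper applies \cref{lexmin-iff-nobirthspx} directly---total reduction guarantees that no birth simplex of the filtration of $\delo[s]{X}$ appears in $R_\tau$---which is exactly the crisp packaging of the gradient-flow machinery you gesture at.
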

The totally reduced filtration boundary matrix can be computed using \cref{alg:exhaustive}.
The connection between lexicographically minimal cycles and matrix reduction algorithms used in the context of persistent homology has already been discussed in \cite{MR4517098}.

    For a sufficiently good sample of a compact $d$-submanifold of Euclidean space, the union of closed balls centered at the sample points deformation retracts onto the submanifold by a closest point projection \cite{attali2023tight,MR2383768}.
As the Delaunay complex is naturally homotopy equivalent to the union of closed balls \cite{MR4652781}, this implies that %
    the fundamental class of the manifold is captured directly in the $d$-dimensional persistent homology of the Delaunay filtration through a natural isomorphism.
    Similar observations that are based on the induced maps in persistent homology and with weaker assumptions have also been made before \cite{MR2279866}\cite[Section~11.4]{MR3837127}.
    Moreover, in the preprint \cite[Theorem~9.1]{cohensteiner:hal-02391190} it is argued that the unique non-trivial lexicographically optimal $2$-cycle in the \v{C}ech complex of a sample of a $2$-submanifold of Euclidean space yields a homeomorphic reconstruction of the submanifold.
    Note that the lexicographically optimal cycles considered in \cite{MR4517098,cohensteiner:hal-02391190,MR4598045,vuillamy:tel-03339931} are based on a slightly different total order on simplices, refining the minimum enclosing radius function.
    Relating these particular choices and results to our results and those in \cite{MR3605986} remains an interesting open problem.

    \begin{figure}[t]
        {\includegraphics[height=.3\textwidth]{bunny_reconstructed_1024}}
      \hfill
        {\includegraphics[height=.5\textwidth]{buddha_reconstructed_1024}}
      \hfill
        {\includegraphics[height=.3\textwidth]{dragon_reconstructed_1024}}
      \caption{The lexicographically minimal cycle corresponding to the most persistent feature of the Delaunay filtration for 3D scan point clouds \cite{scans_stanford} yields an accurate reconstruction of the~surface.}
      \label{fig:examples}
      \end{figure}

Together with \cref{corollary-b}, the discussion above suggests the following heuristic for a simple and robust algorithm for shape reconstruction from a point cloud, combining Wrap and lexicographically minimal cycles with persistent homology: Take the most persistent $d$-dimensional feature of the Delaunay filtration, i.e., the interval in the barcode with the largest death/birth ratio.
Intuitively, this feature is born at a small scale and only gets filled in at a large scale.
By \cref{corollary-b}, the corresponding lexicographically minimal cycle is guaranteed to be supported on the Wrap complex for a small scale parameter.
See \cref{fig:examples} for an illustration, which can readily be reproduced using the code provided in \cite{git} by executing \lstinline"docker build -o output github.com/fabian-roll/wrappingcycles" on any machine with Docker installed and configured with sufficient memory (16GB recommended).

\section{Preliminaries}
A finite \emph{simplicial complex $K$} is a collection of finite nonempty sets such that for any set $\sigma\in K$ and any nonempty subset $\rho\subseteq\sigma$ one has~$\rho\in K$.
A set~$\sigma \in K$ is called a \emph{simplex}, and $\dim\sigma=\card\sigma-1$ is its \emph{dimension}.
Moreover, $\rho$ is said to be a \emph{face of $\sigma$} and $\sigma$ a \emph{coface of $\rho$}.
If $\dim\rho=\dim\sigma-1$, then we call $\rho$ a \emph{facet of $\sigma$}, $\sigma$ a \emph{cofacet of $\rho$}, and $(\rho,\sigma)$ a \emph{facet pair}.

\subsection{Persistent homology and apparent pairs}
\label{prelim-pers-hom}
\subparagraph*{Based chain complexes and filtrations}
We assume the reader to be familiar with the basics of homological algebra (see, e.g., \cite{MR755006,MR1269324}).
By a \emph{based chain complex} $(C_*,\chainbasis{*})$ (sometimes also called a \emph{Lefschetz complex} \cite{lefschetz1942algebraic}) we mean a bounded chain complex $C_*=(C_n,\partial)_{n\in \mathbb{N}}$ of finite dimensional vector spaces over a field $\mathbb{F}$ together with a basis $\chainbasis{n}$ for each $C_n$.
Consider the canonical bilinear form $\IP{\cdot}{\cdot}$ on $C_*$ for the given basis $\chainbasis{*}$, i.e., for $a,b\in \chainbasis{*}$ we have $\IP{a}{b}=0$ if $a\neq b$ and $\IP{a}{a}=1$.
Given two basis elements $c\in \chainbasis{n}$ and $e\in \chainbasis{n+1}$ such that $\IP{\partial e}{c}\neq 0$, we call $c$ a \emph{facet} of $e$ and $e$ a \emph{cofacet} of $c$, and we call $(c,e)$ a \emph{facet pair}.
 
For a poset $P$ and any element $p\in P$ we denote by $\downarrow p=\{q\in P \mid q\leq p\}$ the \emph{down set} of $p$.
A \emph{filtration} of $(C_*,\chainbasis{*})$ is a collection of based chain complexes $(C_*^i,\chainbasis{*}^i)_{i\in I}$, where $I$ is a totally ordered indexing set, such that $\chainbasis{*}^i \subseteq \chainbasis{*}$ spans the subcomplex $C_*^i$ of~$C_*$ for all $i \in I$, and $i\leq j$ implies $\chainbasis{*}^i\subseteq \chainbasis{*}^j$.
We call the filtration an \emph{elementwise filtration} if for any $j$ with immediate predecessor $i$ we have that~$\chainbasis{*}^j\setminus \chainbasis{*}^i$ contains exactly one basis element~$\sigma_j$.
Thus, elementwise filtrations of $(C_*,\chainbasis{*})$ correspond bijectively to total orders $<$ on $\chainbasis{*}$ such that the down sets $\downarrow \sigma_j=\{\sigma_i\mid \sigma_i\leq \sigma_j\}$ span subcomplexes.

  Our main example for a based chain complex is the simplicial chain complex $C_*(K)$ of a simplicial complex $K$ with coefficients in a field.
If the vertices of $K$ are totally ordered, then there is a canonical basis of $C_n(K)$ consisting of the $n$-dimensional simplices of $K$ oriented according to the given vertex order, and a simplexwise filtration of $K$ induces a canonical elementwise filtration of $C_*(K)$.

\subparagraph*{Matrix reduction}
For a based chain complex $(C_*,\chainbasis{*}=\sigma_1<\dots<\sigma_l)$ with an elementwise filtration, we often identify an element of $C_*$ with its coordinate vector in $\mathbb{F}^l$.
The \emph{filtration boundary matrix} $D$ of an elementwise filtration is the matrix that represents the boundary map $\partial$ with respect to the total order on the basis elements induced by the filtration.

\begin{figure}[t]
\begin{algorithm}[H]
\caption{Standard matrix reduction}\label{alg:reduction}
\KwIn{$D=\partial$ an $l\times l$ filtration boundary matrix}
\KwResult{$R=D\cdot \VMatrix$ with $R$ reduced and $\VMatrix$ full rank upper triangular}
$R=D$;
$\VMatrix=\op{Id}$\;
\For{$j=1$ \KwTo $l$}
{
    \While{there exists $i< j$ with $\pivot{R_i}=\pivot{R_j}>0$}
    {
        $\mu=-\pivotentry{R_j}/\pivotentry{R_i}$\;
        $R_j=R_j + \mu\cdot R_i$;\
        $\VMatrix_j=\VMatrix_j + \mu\cdot \VMatrix_i$\;
    }
}
  \Return $R,\VMatrix$
\end{algorithm}
\end{figure}

\begin{figure}[t]
\begin{algorithm}[H]
\caption{Exhaustive matrix reduction}\label{alg:exhaustive}
\KwIn{$D=\partial$ an $l\times l$ filtration boundary matrix}
\KwResult{$R=D\cdot \VMatrix$ with $R$ totally reduced and $\VMatrix$ full rank upper triangular}
$R=D$;
$\VMatrix=\op{Id}$\;
\For{$j=1$ \KwTo $l$}
{
    \While{there exist $s, i < j$ with $\pivot{R_i}=s$ and $R_{s,j}\neq 0$}
    {
        $\mu=-R_{s,j}/R_{s,i}$\;
        $R_j=R_j + \mu\cdot R_i$;\
        $\VMatrix_j=\VMatrix_j + \mu\cdot \VMatrix_i$\;
    }
}
  \Return $R,\VMatrix$
\end{algorithm}
\end{figure}

For a matrix $R$, we denote by $R_j$ the $j$th column of $R$, and by $R_{i,j}$ the entry of $R$ in row $i$ and column $j$.
The \emph{pivot} of a column $R_j$, denoted by $\pivot{R_j}$, is the maximal row index~$i$ with $R_{i,j}\neq 0$, taken to be $0$ if all entries are $0$.
Otherwise, the non-zero entry is called the \emph{pivot entry}, denoted by $\pivotentry{R_i}$.
We define $\pivots{R}=\{i \mid i = \pivot{R_j} \neq 0\}$ to be the collection of all non-zero column pivots.
Moreover, we call a column $R_j$ \emph{reduced} if its pivot cannot be decreased by adding a linear combination of the columns $R_i$ with $i<j$, and we call the matrix $R$ reduced if all its columns are reduced.
Finally, we call the matrix $R$ \emph{totally reduced} if for each $i<j$ we have $R_{s,j}=0$, where $s=\pivot{R_i}$.

We call a matrix $\VMatrix$ a \emph{reduction matrix} 
if it is a full rank upper triangular matrix such that $R=D\cdot \VMatrix$ is reduced and $\VMatrix$ is \emph{homogeneous}, meaning that respects the degrees in the chain complex.
%
Any such reduction $R=D\cdot \VMatrix$ of the filtration boundary matrix induces a direct sum decomposition (see, e.g., \cite{MR4298669,MR2854319}) of $C_*$ into elementary chain complexes in the following way:
If $R_j\neq 0$, then we have the summand
\[
\dots \to 0 \to \langle  \VMatrix_j\rangle\stackrel{\partial }{\to} \langle R_j \rangle\to 0\to \dots\ ,
\]
in which case we call $j$ a \emph{death index}, $i=\pivot{R_j}$ a \emph{birth index}, and $(i,j)$ an \emph{index persistence pair}.
If $R_i=0$ and~$i\notin \pivots{R}$, then we have the summand
\[\dots \to 0 \to \langle  \VMatrix_i\rangle\to 0\to \dots\ , \]
in which case we call $i$ an \emph{essential index}.
Moreover, we call the element $\sigma_i$ a \emph{birth}, \emph{death}, or \emph{essential element}, if its index is a birth, death, or essential index.
Similarly, we call a pair of elements $(\sigma_i,\sigma_j)$ a \emph{persistence pair}, if the pair $(i,j)$ is an index persistence pair.
Note that this is independent of the specific reduction of the filtration boundary matrix.
By taking the intersection with the filtration, one obtains elementary filtered chain complexes, in which~$R_j$ is a cycle appearing in the filtration at index $i=\pivot{R_j}$ and becoming a boundary when~$\VMatrix_j$ enters the filtration at index $j$, and in which an essential cycle $\VMatrix_i$ enters the filtration at index~$i$.
Thus, the \emph{barcode} of the \emph{persistent homology} \cite{MR2572029} of the elementwise filtration is given by the collection of intervals $\{[i,j)\mid (i,j) \text{ index persistence pair}\}\cup\{[i,\infty) \mid i \text{ essential index}\}$.

Such a reduction $R=D\cdot \VMatrix$ can be computed by a variant of Gaussian elimination~\cite{MR2389318}, as in \cref{alg:reduction}.
A slight modification is \cref{alg:exhaustive}, which computes a totally reduced filtration boundary matrix, as used in \cref{corollary-b}.
This is also known as \emph{exhaustive reduction}, and appears in various forms in the literature \cite{MR4156247,MR4517098,MR1994939,MR2121296}.

\subparagraph*{Apparent pairs} 
Many optimization schemes have been developed in order to speed up the computation of persistent homology.
One of them is based on \emph{apparent pairs} \cite{MR4298669}, a concept which lies at the interface of persistence and discrete Morse theory.

\begin{definition}
\label{def:app-pairs}
Let $(C_*,\chainbasis{*}=\sigma_1<\dots<\sigma_l)$ be a based chain complex with an elementwise filtration.
We call a pair of basis elements $(\sigma_i,\sigma_j)$ an \emph{apparent pair} if $\sigma_i$ is the maximal facet of~$\sigma_j$ and $\sigma_j$ is the minimal cofacet of $\sigma_i$.
\end{definition}
 In the context of persistence, the interest in apparent pairs stems from the following observation \cite{MR4298669}, immediate from the definitions.

 \begin{lemma}
 \label{app-implies-pers}
 For any apparent pair $(\sigma,\tau)$ of an elementwise filtration, the column of $\tau$ in the filtration boundary matrix is reduced, and $(\sigma,\tau)$ is a persistence pair.
 \end{lemma}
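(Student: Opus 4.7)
The plan is to unpack \cref{def:app-pairs} into two direct statements about the filtration boundary matrix $D$ and then translate these into the two claims of the lemma. First, I would read off the pivot of the $\tau$-column $D_j$ directly from $\partial \sigma_j$: the nonzero entries of $D_j$ sit in rows indexed by facets of $\sigma_j$, and by hypothesis $\sigma_i$ is the maximal such facet, so $\pivot{D_j}=i$.

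The core step is to check that $D_j$ is already reduced. If some linear combination $D_j+\sum_{k<j}\lambda_k D_k$ had a strictly smaller pivot, then the entry in row $i$ would have to vanish, forcing some $D_k$ with $k<j$ to have a nonzero entry in row $i$, that is, $\langle \partial\sigma_k,\sigma_i\rangle\neq 0$. But this would exhibit $\sigma_k$ as a cofacet of $\sigma_i$ whose filtration index is strictly less than that of $\sigma_j$, contradicting the second half of the apparent-pair hypothesis that $\sigma_j$ is the minimal cofacet of $\sigma_i$.

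To conclude that $(\sigma_i,\sigma_j)$ is a persistence pair, I would appeal to the paper's own remark that index persistence pairs do not depend on the specific reduction, and exhibit one concrete reduction $R=D\cdot \VMatrix$ in which $\pivot{R_j}=i$. Running \cref{alg:reduction} on $D$ furnishes such an $R$: by a short induction on $k$ using the same minimal-cofacet argument, no column $R_k$ with $k<j$ ever picks up a nonzero entry in row $i$ (it starts with none and is only modified by adding earlier columns that, by induction, also have none). Hence when the outer loop reaches column $j$ the inner while-loop finds no pivot collision and terminates immediately, leaving $R_j=D_j$ with pivot~$i$.

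I do not expect a substantive obstacle here; the whole argument is a careful bookkeeping exercise around the two halves of the apparent-pair definition. The only slightly delicate point is the inductive verification that row $i$ of the earlier columns stays zero throughout reduction, but this is an immediate propagation of the minimality of $\sigma_j$ as a cofacet of $\sigma_i$.
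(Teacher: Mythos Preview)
Your argument is correct. The paper does not give a proof of this lemma at all; it simply states that the observation is ``immediate from the definitions'' and cites \cite{MR4298669}. Your write-up is a faithful unpacking of that immediacy: the maximal-facet half of \cref{def:app-pairs} pins down $\pivot{D_j}=i$, and the minimal-cofacet half forces $D_{i,k}=0$ for all $k<j$, which both shows $D_j$ is reduced and, via the induction you describe, guarantees that \cref{alg:reduction} leaves column $j$ untouched.
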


\subsection{Lexicographic optimality}
In this section, we introduce the lexicographic order on chains for a based chain complex $(C_*,\chainbasis{*} = \sigma_1<\dots<\sigma_l)$ with an elementwise filtration, extending the definitions in \cite{MR4517098}.
For any chain $c = \sum_{i}\lambda_i\sigma_i \in C_n$, we define its \emph{support} $\supp{\chainbasis{*}}{c}$ to be the set of basis elements $\sigma_i \in \chainbasis{n}$ with~$\lambda_i\neq 0$.
Note that this is not to be confused with the supporting subcomplex, which also contains the faces of these basis elements.
Given a totally ordered set $(X, \leq)$, we consider the lexicographic order $\preceq$ on the power set $2^X$ given by identifying any subset $A \subseteq X$ with its characteristic function and considering the lexicographic order on the set of characteristic functions.
Explicitly, for $A, B \subseteq X$ we have $A \preceq B$ if and only if $A=B$ or the maximal element of the symmetric difference $(A \setminus B) \cup (B \setminus A)$ is contained in $B$.

\begin{definition}
  The \emph{lexicographic preorder} $\sqsubseteq$ on the collection of chains $C_n$ is given by~$c_1\sqsubseteq c_2$ if and only if $\supp{\chainbasis{*}}{c_1} \preceq \supp{\chainbasis{*}}{c_2}$ in the lexicographic order on subsets of~$\chainbasis{n}$.
We write $\sqsubset$ for the corresponding strict preorder.
\end{definition}

If we consider a simplicial chain complex with coefficients in $\mathbb{Z}/2\mathbb{Z}$, then this preorder is a total order, and it coincides with the one considered in \cite[Definition 2.1]{MR4517098}.

\begin{definition}
  \label{def:lexoptimal}
We call a chain $c\in C_n$ \emph{lexicographically minimal}, or \emph{irreducible}, if there exists no strictly smaller homologous chain $(c + \partial e) \sqsubset c$ in the lexicographic preorder, where~$e\in C_{n+1}$.
Otherwise, we call the chain $c$ \emph{reducible}.
\end{definition}

  It follows from \cref{unique-minimizers} that each homology class of the chain complex $C_*$ has a unique lexicographically minimal representative cycle, regardless of the coefficient field.

\subsection{Discrete Morse theory and apparent pairs}
\label{discrete-morse-apparent-pairs}

Following closely the exposition in~\cite{BauerRoll}, we use the following generalization of a discrete Morse function, originally due to Forman \cite{MR1612391,MR3605986,MR2537376}.
Let $K$ be a simplicial complex.

\begin{definition}
\label{def:gen-morse-fct}
A function $f\colon K\to \mathbb{R}$ is a \emph{generalized discrete Morse function} if
\begin{itemize}
 \item $f$ is monotonic, i.e., for any $\sigma,\tau \in K$ with $\sigma \subseteq \tau$ we have $f(\sigma)\leq f(\tau)$, and
 \item there exists a (unique) partition $\hat{V}$ of $K$ into intervals $[\rho,\phi] = \{ \psi \in K \mid \rho \subseteq \psi \subseteq \phi \}$ in the face poset such that any pair of simplices $\sigma \subseteq \tau$ satisfies $f(\sigma) = f(\tau)$ if and only if $\sigma$ and $\tau$ belong to a common interval in the partition.
\end{itemize}
The collection of \emph{regular} intervals, $[\rho,\phi]$ with $\rho \neq \phi$, is the \emph{discrete gradient} $V$ of $f$ on~$K$, and any singleton interval $[\sigma,\sigma]=\{\sigma\}$, as well as the corresponding simplex $\sigma$, is \emph{critical}.
\end{definition}
If $W$ is another discrete gradient on $K$, then we say that $V$ is a \emph{refinement} of $W$ if each interval in the gradient partition $\hat{W}$ is a disjoint union of intervals in $\hat{V}$.
If the refinement preserves the set of critical simplices, we call it a \emph{regular refinement}.
Moreover, if each regular interval in~$V$ only consists of a pair of simplices, we simply call $f$ a \emph{discrete Morse function}.
We often refer to a discrete gradient without explicit mention of the function $f$, noting that different functions can have the same gradient.

For a monotonic function $g\colon K\to\mathbb{R}$ we write $\sublevel{g}{r}=g^{-1}(-\infty,r]\subseteq K$ for the sublevel set and $\sublevelopen{g}{r}=g^{-1}(-\infty,r)\subseteq K$ for the open sublevel set of $g$ at scale $r\in \mathbb{R}$.

\begin{example}
\label{ex:cech-del}
For a finite subset $X\subset \mathbb{R}^d$ in general position, the \emph{\v{C}ech radius function} $r_\emptyset\colon\cech{X}\to\mathbb{R}$ as well as the \emph{Delaunay radius function} $r_X\colon\del{X}\to\mathbb{R}$, which assign to a simplex the radius of its smallest enclosing sphere and smallest empty circumsphere, respectively, are both generalized discrete Morse functions \cite{MR3605986}.
Moreover, for $r\in\mathbb{R}$ their sublevel sets at scale $r$ are the \emph{\v{C}ech complex} $\cech[r]{X}$ and the \emph{Delaunay complex} $\del[r]{X}$, respectively. Similarly, their open sublevel sets at scale $r$ are the \v{C}ech complex $\cecho[r]{X}$ and the Delaunay complex $\delo[r]{X}$, respectively.
\end{example}

We now explain the connection between discrete Morse theory and apparent pairs (\cref{def:app-pairs}).
Let $f\colon K\to \mathbb{R}$ be a monotonic function and assume that the vertices of $K$ are totally ordered.
The \emph{$f$-lexicographic order} is the total order $\leq_f$ on $K$ given by ordering the~simplices
   by their value under $f$,
   then by dimension,
   then by the lexicographic order induced by the total vertex order.

A persistence pair $(\sigma,\tau)$ of the elementwise filtration determined by the $f$-lexicographic order is a \emph{zero persistence pair} if $f(\sigma)=f(\tau)$.
The collection of apparent pairs of a simplexwise filtration forms a discrete gradient \cite[Lemma 3.5]{MR4298669}, the \emph{apparent pairs gradient}.
There is a further connection between apparent pairs and discrete Morse functions: Let $f\colon K\to \mathbb{R}$ be a generalized discrete Morse function with discrete gradient~$V$.
Consider the regular refinement of $V$ obtained by applying a \emph{minimal vertex refinement} to each interval:
\[\widetilde V=\{(\psi\setminus \{v\},\psi\cup\{v\})\mid \psi\in  [\rho,\phi]\in V,\ v=\min(\phi\setminus\rho) \} .\]
By the following proposition from \cite[Lemma 9]{BauerRoll}, this regular refinement is induced by the apparent pairs gradient for the simplexwise filtration determined by the $f$-lexicographic order.
In particular, the zero persistence pairs of this simplexwise filtration are precisely the zero persistence apparent pairs, and the $V$-critical simplices of $K$ are precisely the simplices that are either essential or contained in a non-zero persistence pair.
\begin{proposition}%
	\label{apparent-equal-gradient}
	 The zero persistence apparent pairs with respect to the $f$-lexicographic order are precisely the gradient pairs of $\widetilde V$.
\end{proposition}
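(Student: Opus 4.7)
The plan is to establish both inclusions simultaneously by analyzing how the $f$-lexicographic order acts on facets and cofacets. I would first localize the problem to a single interval: since $f$ is constant on each interval $[\rho,\phi]\in V$, any facet of $\tau$ outside the interval has strictly smaller $f$-value, while any cofacet of $\sigma$ outside the interval has strictly greater $f$-value. Thus the maximal facet of $\tau$ and the minimal cofacet of $\sigma$ in the $f$-lexicographic order must both lie in the same interval as $\tau$ and $\sigma$, respectively. This reduces both the condition for being an apparent pair and the characterization of $\widetilde V$ to a question about the vertex-lexicographic order within a single interval.

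The central vertex-level computation I would carry out next is this: for two facets $\tau\setminus\{w_1\}$ and $\tau\setminus\{w_2\}$ of the same simplex, the symmetric difference is $\{w_1,w_2\}$, whose maximum, say $w_1$, lies in $\tau\setminus\{w_2\}$. By the defining rule for $\preceq$, removing the smaller vertex yields the lexicographically larger facet; a symmetric computation shows that adding the smaller vertex yields the lexicographically smaller cofacet. Combined with the localization above, this shows that the maximal facet of $\tau$ in the $f$-lex order is $\tau\setminus\{\min(\tau\setminus\rho)\}$, and the minimal cofacet of $\sigma$ is $\sigma\cup\{\min(\phi\setminus\sigma)\}$.

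With this in hand, the two inclusions become short. For a gradient pair $(\psi\setminus\{v\},\psi\cup\{v\})\in\widetilde V$ with $v=\min(\phi\setminus\rho)$, I would verify that $v=\min(\tau\setminus\rho)=\min(\phi\setminus\sigma)$, which holds because $v$ is the minimum of supersets of both $\tau\setminus\rho$ and $\phi\setminus\sigma$ while lying in each. Conversely, for a zero persistence apparent pair $(\sigma,\tau)$ in an interval $[\rho,\phi]$, the analysis above forces $\tau\setminus\sigma=\{v\}$ with $v=\min(\tau\setminus\rho)=\min(\phi\setminus\sigma)$; then every $u\in\phi\setminus\rho$ belongs to either $\tau\setminus\rho$ or $\phi\setminus\tau\subseteq\phi\setminus\sigma$, giving $u\geq v$, hence $v=\min(\phi\setminus\rho)$ and the pair lies in $\widetilde V$. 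The only subtlety I anticipate is keeping the orientation of the subset-lex rule straight, since it ranks subsets by their \emph{top} elements rather than via lex on sorted tuples from the bottom; getting that direction right is the one step easy to slip on.
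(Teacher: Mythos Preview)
Your argument is correct. Note that the paper does not actually prove this proposition; it is quoted from an external reference (Lemma~9 of the companion paper by the same authors), so there is no in-paper argument to compare against. Your approach---localizing to a single regular interval via $f(\sigma)=f(\tau)$ together with $\sigma\subset\tau$, then identifying the extremal facet and cofacet within that interval using the max-of-symmetric-difference rule, and finally verifying $v=\min(\phi\setminus\rho)$ via the case split on membership in $\tau$---is the natural direct proof, and all steps are sound. Your caution about the direction of the subset-lex convention is well placed, and you have it right.
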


\section{Descending complexes and gradient refinements}
\label{section:descending}

We extend the definition of the \emph{Wrap complex} from \cite{MR3605986} to an arbitrary subset~$\critsubset$ of the set of critical simplices with respect to a discrete gradient $V$ and study its behavior under gradient refinements.
We further extend it to monotonic functions $g\colon K\to \mathbb{R}$ that are \emph{compatible} with $V$: all simplices within an interval $I\in \hat{V}$ have the same function value, $g(I)\in\mathbb{R}$.

The {gradient partition} $\hat{V}$ has a canonical poset structure $\leq_{\hat{V}}$ given by the transitive closure of the relation
$
I\sim J \text{ if and only if there exists a face }\sigma \in I\text{ of a simplex }\tau\in J.
$
The \emph{down set} of a subset $A\subseteq \hat{V}$ is the set of intervals~$\downarrow A = \{I\in \hat{V}\mid \exists J\in A: I\leq_{\hat{V}} J\}$,
and for $r\in \mathbb{R}$ we denote the discrete gradient $V$ restricted to the sublevel set $\sublevel{g}{r}$ by
$
\gradres{V}{r}=\{I\in V\mid g(I)\leq r\}.
$
Note that if $I\leq_{\hat{V}} J$, then $g(I)\leq g(J)$, and hence for a subset $A\subseteq \hat{\gradres{V}{r}}\subseteq \hat{V}$ the down sets with respect to the canonical poset structure on $\hat{V_r}$ and with respect to the canonical poset structure on $\hat{V}$ coincide.

\begin{definition}
  \label{def:desc-cpx}
 For a discrete gradient $V$ on $K$, the \emph{descending complex} is the~subcomplex
 \[\descpxV{V}=\bigcup \downarrow \{\{\sigma\}\mid \sigma\in K \text{ critical}\} \]
 of $K$ given by the union of intervals in the down sets of the critical intervals.
More generally, if $\critsubset$ is a subset of the set of critical simplices, the \emph{descending complex} $\descpxVC{\critsubset}{V}$ is the~subcomplex
 \[\descpxVC{\critsubset}{V}=\bigcup \downarrow \{\{\sigma\}\mid \sigma \in C\}\]
 of $K$.
 Moreover, for a monotonic function $g\colon K\to \mathbb{R}$ that is compatible with $V$, the \emph{descending complex} $\descpx{V}{g}{r}$ at scale $r\in \mathbb{R}$ is the subcomplex
  \[\descpx{V}{g}{r}=\descpxV{\gradres{V}{r}}=\descpxVC{\critsubsetparameter{r}{V}{g}}{V}=\bigcup \downarrow \{\{\sigma\}\mid \sigma\in K \text{ critical, } g(\sigma)\leq r \} \]
  of $\sublevel{g}{r}$, where $\critsubsetparameter{r}{V}{g}=\{\sigma\in K \text{ critical}\mid g(\sigma)\leq r\}$.
If $V$ is the discrete gradient of a generalized discrete Morse function $f$, we simply write $\descpxf{f}{r}$
 for~$\descpx{V}{f}{r}$.
\end{definition}

The descending complex $\descpxV{V}$ in the context of discrete Morse theory is motivated by the concept of a \emph{descending} or \emph{stable manifold} of a critical point from smooth Morse theory, which is central in the original definition of the Wrap complex \cite{MR2038483}.
Note that the descending complex $\descpxf{r_X}{r}\subseteq \del[r]{X}$ of the Delaunay radius function $r_X$ (see \cref{ex:cech-del}) is precisely the \emph{Wrap complex}, $\wrap[r]{X}$, from \cite{MR3605986}.
It has been shown that a variant of the Wrap complex can be used to topologically reconstruct a submanifold by choosing a suitable subset of critical simplices \cite{MR2460367,MR2485260,DBLP:journals/cgf/Sadri09}, which also motivates our definition of $\descpxVC{\critsubset}{V}$.

We now study the behavior of the descending complex under gradient refinements.
\begin{proposition}%
  \label{refinement-implies-nested-descendingcpxV}%
  Let $V$ be a discrete gradient on $K$, let $\critsubset$ be a subset of $V$-critical simplices, and let $W$ be a refinement of $V$.
  Then the descending complex $\descpxVC{C}{W}$ is a subcomplex of the descending complex $\descpxVC{C}{V}$.
  \end{proposition}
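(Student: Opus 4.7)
The plan is to unwind the definitions of descending complex and poset structure $\leq_{\hat{V}}$, and show that every $\hat{W}$-interval below a critical singleton $\{\sigma\}$ with $\sigma\in \critsubset$ sits inside some $\hat{V}$-interval below the same $\{\sigma\}$.

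First I would make a preliminary observation that ensures $\descpxVC{\critsubset}{W}$ is well-defined, namely that every $V$-critical simplex is also $W$-critical. Since $W$ refines $V$, each interval in $\hat{V}$ is a disjoint union of intervals in $\hat{W}$; applied to the singleton $\{\sigma\}$ for a $V$-critical simplex $\sigma$, this disjoint union can only consist of $\{\sigma\}$ itself. Hence $\{\sigma\}$ is also a singleton of $\hat{W}$, so $\sigma$ is $W$-critical and $\critsubset$ remains a subset of the $W$-critical simplices.

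Next, for the main containment, it suffices to show that for every $I\in \hat{W}$ with $I\leq_{\hat{W}}\{\sigma\}$ for some $\sigma\in\critsubset$, every simplex of $I$ belongs to $\descpxVC{\critsubset}{V}$. Since $W$ refines $V$, there is a unique $J(I)\in\hat{V}$ with $I\subseteq J(I)$, and it is enough to prove $J(I)\leq_{\hat{V}}\{\sigma\}$. By definition of the poset structure, $I\leq_{\hat{W}}\{\sigma\}$ means there is a chain $I=I_0\sim I_1\sim\dots\sim I_n=\{\sigma\}$ in the adjacency relation on $\hat{W}$. Setting $J_k=J(I_k)$, the preliminary observation gives $J_n=\{\sigma\}$. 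For each adjacency $I_k\sim I_{k+1}$, witnessed by a face $\rho\in I_k$ of some $\psi\in I_{k+1}$, the same incidence is witnessed in $\hat{V}$ since $\rho\in J_k$ and $\psi\in J_{k+1}$; thus either $J_k=J_{k+1}$ or $J_k\sim J_{k+1}$ in $\hat{V}$. Concatenating these steps yields $J(I)=J_0\leq_{\hat{V}} J_n=\{\sigma\}$, as desired.

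Finally, since every simplex of $I$ lies in $J(I)\in\downarrow\{\{\sigma\}\}$ in the $\hat{V}$-poset, it is contained in $\descpxVC{\critsubset}{V}$. Ranging over all $\sigma\in\critsubset$ and all $W$-intervals in their down sets gives $\descpxVC{\critsubset}{W}\subseteq \descpxVC{\critsubset}{V}$, which is the claimed containment of subcomplexes.

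The argument is essentially bookkeeping, so there is no single hard step; the only point that requires a bit of care is the handling of the case $J_k=J_{k+1}$ when translating $\hat{W}$-adjacencies into $\hat{V}$-adjacencies, and the observation that singletons cannot be split by a refinement, which is what anchors the end of the chain at $\{\sigma\}$ in both posets.
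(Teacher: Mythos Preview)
Your proof is correct and follows essentially the same approach as the paper: both define the containment map $\hat{W}\to\hat{V}$ sending a $W$-interval to the unique $V$-interval containing it, observe that $V$-critical singletons are fixed, and verify that this map respects the poset orders so that down sets in $\hat{W}$ land inside down sets in $\hat{V}$. The only difference is cosmetic: the paper packages the chain-chasing step as the single assertion that this map is a poset map, whereas you spell out the translation of adjacency chains explicitly.
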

  \begin{proof}
    Note first that every $V$-critical simplex is also $W$-critical, as $W$ is a refinement of $V$.
    By the same reason, there exists a set map $\varphi\colon \hat{W}\to \hat{V}$ between the gradient partitions such that for every $B\in \hat{W}$ we have~$B\subseteq \varphi(B)$.
    It follows straightforwardly from the definition of the canonical poset structures on $\hat{W}$ and~$\hat{V}$ that $\varphi$ is a poset map.
  Thus, for every~$W$-critical simplex $\sigma\in C$ and interval $A\in \hat{W}$ with $A\leq_{\hat{W}} \{\sigma\}$, we have $\varphi(A)\leq_{\hat{V}} \varphi(\{\sigma\})=\{\sigma\}\in \hat{V}$, as $\sigma$ is also $V$-critical.
   It now follows directly from the construction of the descending complexes, that $A\subseteq \varphi(A)\subseteq\descpxVC{C}{V}$ and $\descpxVC{C}{W}\subseteq\descpxVC{C}{V}$.
  \end{proof}

  \begin{remark}
If $L$ is a subcomplex of $K$ and the complement $K\setminus L$ is the disjoint union of regular intervals in $V$, then $V$ induces a collapse $K\searrow L$ \cite[Theorem 2.2]{MR3605986}.
It follows directly from this and the construction of $\descpxV{V}$, that $\descpxV{V}$ is the smallest subcomplex of~$K$ such that $V$ induces a collapse $K\searrow \descpxV{V}$.
Moreover, if $W$ is a regular refinement of $V$, implying $\descpxV{W}\subseteq \descpxV{V}$ by \cref{refinement-implies-nested-descendingcpxV},
the complement $\descpxV{V}\setminus \descpxV{W}$ is the disjoint union of regular intervals in $W$.
Similar to before, it follows that $W$ induces a collapse $\descpxV{V}\searrow \descpxV{W}$.
In particular, the inclusion $\descpxV{W}\hookrightarrow \descpxV{V}$ is a homotopy equivalence (see~\cref{fig:refinement-heq}).
      
   \begin{figure}[t]
    \begin{minipage}[c]{0.5\linewidth}
    \centering
      \vcent{\includegraphics[width=.65\textwidth]{descending_generalized_subcomp.pdf}}
    \end{minipage}
    \hfill
    \begin{minipage}[c]{0.5\linewidth}
    \centering
      \vcent{\includegraphics[width=.65\textwidth]{descending_refined_subcomp.pdf}}
    \end{minipage}
    \caption{Left: Generalized discrete gradient (blue) with corresponding descending complex (green).
    Right: lexicographic gradient refinement (blue) with corresponding descending complex~(green).}
    \label{fig:refinement-heq}
    \end{figure}
  \end{remark}

\section{Algebraic Morse theory and persistence}
\label{algmorse-persistence}
We saw that the apparent pairs are closely related to persistent homology and discrete Morse theory.
In this section, we show how all the persistence pairs are related to algebraic Morse theory \cite{MR2171225, MR2488864, kozlov}, also called algebraic discrete Morse~theory.
We also show how this approach connects to lexicographically minimal cycles. %
Let $(C_*,\chainbasis{*})$ be a based chain~complex.
\begin{definition}
\label{def:alg-morse-fct}
A function $f\colon \chainbasis{*}\to \mathbb{R}$ is an \emph{algebraic Morse function} if
\begin{itemize}
 \item $f$ is monotonic, i.e., for any facet $\sigma\in\chainbasis{n}$ of $\tau\in\chainbasis{n+1}$ we have $f(\sigma)\leq f(\tau)$, and
 \item there exists a (unique) disjoint collection $V$ of facet pairs such that every facet pair $(\mu,\eta)$ satisfies $f(\mu)=f(\eta)$ if and only if $(\mu,\eta) \in V$.
\end{itemize}
We call $V$ the \emph{algebraic gradient} of $f$ on $\chainbasis{*}$, and a basis element \emph{critical} if it is not contained in any pair of $V$. Moreover, for $(\sigma,\tau) \in V$ we call $\sigma$ a \emph{gradient facet} and $\tau$ a \emph{gradient~cofacet}.
\end{definition}
We often refer to an algebraic gradient without explicit mention of the associated function.

\begin{remark}
\label{top-to-alg-gradient}
The definitions of algebraic Morse function and algebraic gradient generalize those from discrete Morse theory.
Let $f\colon K\to\mathbb{R}$ be a discrete Morse function with discrete gradient $V$.
Recall that the simplicial chain complex $C_*(K)$ has a basis $\chainbasis{*}$ consisting of the simplices of $K$ with some chosen orientation.
We can now interpret $f$ as an algebraic Morse function on this basis $\chainbasis{*}$ and the discrete gradient $V$ as an algebraic gradient.
\end{remark}

\subsection{Gradient pairs from persistence pairs}
\label{gradientpairs-persistentpairs}

We now explain how all persistence pairs determine an algebraic gradient that relates to discrete Morse theory through apparent pairs (\cref{app-implies-pers,apparent-equal-gradient}).
This establishes the framework for a key step in our proof of \cref{theorem-a}.
Let $(C_*,\chainbasis{*}=\sigma_1<\dots<\sigma_l)$ be a based chain complex with an elementwise filtration, and let $R=D\cdot \VMatrix$ be a reduction of the filtration boundary matrix.
For any chain $c\in C_n$, we denote by $\chainpivot{\chainbasis{*}}{c}=\max\supp{\chainbasis{*}}{c}$ the maximal basis element in the basis representation of $c$ with respect to $\chainbasis{*}$.
If $v$ is the coordinate vector in $\mathbb{F}^l$ representing~$c$, we also write $\chainpivot{\chainbasis{*}}{v}$ for $\chainpivot{\chainbasis{*}}{c}=\sigma_{\pivot{v}}$.

The direct sum decomposition of filtered chain complexes explained in \cref{prelim-pers-hom} yields a straightforward interpretation of persistence pairs as an algebraic gradient, which we discuss in \cref{appendix:gradientpairs-persistentpairs}. However, this gradient is not suitable for our purposes as it neither relates directly to apparent pairs nor lexicographically optimal cycles. Hence, we consider an alternative approach that uses the distinctness of non-zero pivot elements in the reduced matrix $R=D\cdot \VMatrix$.
To this end, we equip the chain complex $C_*$ with the new ordered basis $\Omega_{*}=\tau_1<\dots <\tau _l$ given by
\[
\tau_i=
\begin{dcases*}
\sigma_i & if $i$ is a birth  or essential index,\\
S_i & if $i$ is a death index.
\end{dcases*}
\]
We call this basis $\Omega_*$ the \emph{reduction basis}.
Note that with respect to the original basis we have $\chainpivot{\chainbasis{*}}{\tau_i}=\sigma_i$ for all $i$.
Moreover, note that for every death index $j$ and $R_j=D\cdot \VMatrix_j$ we have $\chainpivot{\Omega_{*}}{R_j}=\chainpivot{\chainbasis{*}}{R_j}$.
By pairing the death columns $\VMatrix_j$ with the pivot elements $\chainpivot{\Omega_{*}}{R_j}$ of their boundaries $R_j=D\cdot \VMatrix_j$, we obtain a set of disjoint pairs, which we call the \emph{reduction gradient} %
of $S$:
\[
\{(\chainpivot{\Omega_{*}}{R_j},\VMatrix_j)\mid\ j\text{ is a death index}\}.
\]

\begin{proposition}
\label{pivot-gradient-is-gradient}
 The reduction gradient is an algebraic gradient on $\Omega_{*}$.
\end{proposition}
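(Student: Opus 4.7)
The plan is to verify the two requirements of \cref{def:alg-morse-fct} for the reduction gradient: that it is a disjoint collection of facet pairs with respect to $\Omega_{*}$, and that there exists a compatible monotonic function whose level-set coincidences are precisely these pairs. The key technical observation underlying both parts is that the change of basis from $\chainbasis{*}$ to $\Omega_{*}$ is unit upper triangular (since $\VMatrix$ starts as the identity in \cref{alg:reduction} and is only modified by adding multiples of earlier columns to later ones), so for every chain $c$ the $\Omega_{*}$-support is contained in the indices $\leq \pivot{c}$, and in particular the $\Omega_{*}$-pivot index coincides with the $\chainbasis{*}$-pivot index.

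First I verify that the reduction gradient consists of facet pairs. For a death index $j$, we have $\tau_j = \VMatrix_j$ and $\partial \tau_j = R_j$, and by the observation above $\chainpivot{\Omega_{*}}{R_j} = \chainpivot{\chainbasis{*}}{R_j} = \sigma_i = \tau_i$, where $(i,j)$ is the associated persistence pair. Hence $\IP{\partial \tau_j}{\tau_i}_{\Omega_{*}} \neq 0$, making $(\tau_i, \tau_j)$ a facet pair in $\Omega_{*}$. Disjointness follows from two facts: distinct non-zero columns of the reduced matrix $R$ have distinct pivots, so the gradient facets across pairs are distinct, and the index sets of births, deaths, and essentials are pairwise disjoint, so no basis element can appear simultaneously as a gradient facet and as a gradient cofacet.

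Next, I exhibit an algebraic Morse function. Define $f\colon \Omega_{*}\to\mathbb{R}$ by $f(\tau_k)=k$ when $k$ is a birth or essential index, and $f(\tau_k)=i$ when $k$ is a death index with persistence partner $i$. Then $f$ assigns equal values to the two entries of each reduction gradient pair, and different pairs and essential elements receive pairwise distinct values. The substantive point is monotonicity, which I check by case analysis on a facet pair $(\tau_a,\tau_b)$ in $\Omega_{*}$. If $b$ is birth or essential, then $\partial\tau_b=\partial\sigma_b$, and using the filtration property $\partial\sigma_b\in C_{*}^{b}$ together with the triangularity observation, the $\Omega_{*}$-support of $\partial\sigma_b$ lies in indices strictly less than $b$, so $a<b$ and $f(\tau_a)\leq a<b=f(\tau_b)$. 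If $b$ is a death index with partner $i$, then the $\Omega_{*}$-support of $R_b$ lies in indices at most $i$, so $a\leq i=f(\tau_b)$; and whether $a$ is birth/essential or itself a death with partner $i'<a$, one obtains $f(\tau_a)\leq i$, with equality attained precisely when $a=i$, which is exactly the gradient pair.

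The main obstacle I anticipate is correctly tracking the $\Omega_{*}$-support of boundaries like $\partial\sigma_b$ for birth or essential $b$, since these are naturally expressed in the original basis $\chainbasis{*}$ rather than in $\Omega_{*}$. The triangularity of the change of basis is what keeps this manageable: because $\VMatrix^{-1}$ is also unit upper triangular, re-expanding in $\Omega_{*}$ only introduces lower-indexed terms and preserves the maximal index of the support. This single observation turns both the facet pair verification and the monotonicity check into short calculations, and it also explains why no extraneous equality $f(\tau_a)=f(\tau_b)$ can arise from facet pairs outside the reduction gradient.
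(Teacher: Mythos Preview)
Your proposal is correct and follows essentially the same approach as the paper: both define the identical function $f(\tau_k)=k$ for birth/essential indices and $f(\tau_k)=\pivot{R_k}$ for death indices, and both rely on the upper-triangularity of $\VMatrix$ to see that the ordered basis $\Omega_*$ is compatible with the elementwise filtration (so facet pairs $(\tau_a,\tau_b)$ satisfy $a<b$). The paper compresses your case analysis into the single remark that $f$ assigns to each death element the index of its \emph{maximal} $\Omega_*$-facet and to every other element its own index, from which monotonicity and the identification of level-set coincidences with the reduction gradient follow immediately; your explicit case split just unpacks this observation.
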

\begin{proof}
 Consider the function $f\colon \Omega_{*}\to\mathbb{N}$ with values $f(\chainpivot{\Omega_{*}}{R_j})=f(\VMatrix_j)={\pivot{R_j}}$ for every death index $j$ and $f(\tau_i)=i$ for every essential index $i$.
Note that, as $S$ is a full rank upper triangular matrix, the ordered basis~$\Omega_*= \tau_1<\dots <\tau _l$ is compatible with the given elementwise filtration, in the sense that $\downarrow \tau_j$ induces the same subcomplex of $C_*$ as $\downarrow \sigma_j$ for every $j$.
In particular, whenever $\tau_i$ is a facet of $\tau_j$, we have $i<j$.
The function $f$ assigns to any basis element of the form $\tau_j=\VMatrix_j$, for~$j$ a death index, the index $i=\pivot{R_j}$ of its maximal facet $\chainpivot{\Omega_{*}}{R_j}=\chainpivot{\chainbasis{*}}{R_j}=\tau_i\in \Omega_*$, and to all other basis elements $\tau_i$ their index $i$.
This implies that $f$ is monotonic with algebraic gradient the reduction~gradient.
\end{proof}

The reduction gradient is closely related to apparent pairs (\cref{def:app-pairs}) in the following sense.
For any apparent pair $(\sigma_i,\sigma_j)$ of the elementwise filtration, the column of~$\sigma_j$ in the filtration boundary matrix is reduced (\cref{app-implies-pers}).
Therefore, we are led to define a reduction matrix $\VMatrix$ to be \emph{apparent pairs compatible} if the column $\VMatrix_j$ contains only one non-zero entry $\VMatrix_{j,j}=1$ for every apparent pair $(\sigma_i,\sigma_j)$.
Note that the collection of apparent pairs of the elementwise filtration forms an algebraic gradient on $\chainbasis{*}$ \cites[Lemma 2.2]{lampret2020chain}[Lemma 3.5]{MR4298669}\ that we call the \emph{apparent pairs gradient}.
The following is a direct consequence of the definitions.

\begin{lemma}
  \label{apparent-pairs-contained-reduction-gradient}
If the reduction matrix $S$ is apparent pairs compatible, then
the apparent pairs gradient of the elementwise filtration on $\chainbasis{*}$ is a subset of the reduction gradient of $S$ on $\Omega_{*}$.
\end{lemma}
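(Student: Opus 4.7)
The plan is to unfold the definitions of the reduction gradient and of apparent pairs compatibility, and observe that the latter forces every apparent pair to appear unaltered in the reduction gradient.

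First I would fix an apparent pair $(\sigma_i,\sigma_j)$ of the elementwise filtration. By \cref{app-implies-pers}, the column $R_j$ in any reduction $R = D\cdot \VMatrix$ is reduced and $(i,j)$ is an index persistence pair; in particular $j$ is a death index, $i$ is the corresponding birth index, and $i = \pivot{R_j}$, a fact that is independent of the specific reduction chosen.

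Next I would invoke apparent pairs compatibility: by definition $\VMatrix_j = e_j$, so $R_j = D\cdot \VMatrix_j = D_j = \partial\sigma_j$ viewed as a coordinate vector in $\mathbb{F}^l$. Since $\sigma_i$ is the maximal facet of $\sigma_j$ (the apparent-pair hypothesis), the maximal index in $\supp{\chainbasis{*}}{\partial\sigma_j}$ is $i$, which is consistent with $\pivot{R_j} = i$ and shows $\chainpivot{\chainbasis{*}}{R_j}=\sigma_i$. By the definition of the reduction basis, since $i$ is a birth index and $j$ is a death index, we have $\tau_i = \sigma_i$ and $\tau_j = \VMatrix_j = \sigma_j$; the reduction gradient therefore contains the pair
\[
\bigl(\chainpivot{\Omega_{*}}{R_j},\,\VMatrix_j\bigr) \;=\; \bigl(\chainpivot{\chainbasis{*}}{R_j},\,\sigma_j\bigr) \;=\; (\sigma_i,\sigma_j),
\]
which is precisely the chosen apparent pair. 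Since this holds for every apparent pair, the inclusion of gradients follows.

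There is no real obstacle beyond carefully tracking the identification between the original basis $\chainbasis{*}$ and the reduction basis $\Omega_*$ on those indices belonging to apparent pairs; the key point is that apparent pairs compatibility makes the two bases agree exactly at such indices, so the pair translates verbatim from $\chainbasis{*}\times\chainbasis{*}$ into $\Omega_*\times\Omega_*$.
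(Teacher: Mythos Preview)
Your proposal is correct and is precisely the unfolding of definitions that the paper has in mind; the paper itself only states that the lemma ``is a direct consequence of the definitions'' without giving further details, and your argument supplies exactly those details.
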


\begin{remark}
\label{reduction-homogeneous}
Both of the reduction algorithms in \cref{prelim-pers-hom} compute a reduction $R=D\cdot \VMatrix$ of the filtration boundary matrix, noting explicitly that $S$ is homogeneous.
Furthermore, \cref{alg:reduction} computes a reduction matrix $\VMatrix$ that is also apparent pairs compatible.
\end{remark}

\subsection{The flow of an algebraic gradient}
\label{section-alg-flow}
We now introduce the flow determined by an algebraic gradient, study its behavior under gradient containment, and analyze in the subsequent sections its relation to lexicographically minimal cycles (\cref{invpivotflow-equ-lexminsimplicial}), as well as the descending complex (\cref{flowinf-supported-descending}).
While the remainder of the paper is mainly focused on cycles, in this section we present results that hold more generally for chains, and which may be of independent interest. Moreover, in \cref{appendix-algflow-matrixred} we demonstrate how the algebraic flow on a cycle can be interpreted as a variant of Gaussian elimination, tying it closely to the exhaustive reduction (\cref{alg:exhaustive}).

Let $(C_*,\chainbasis{*})$ be a based chain complex and $V$ an algebraic gradient on $\chainbasis{*}$.
The following definition, originally for discrete gradients \cite{MR1612391}, carries over naturally to the algebraic setting.
\begin{definition}%
\label{def-alg-flow}
The \emph{flow} $\Phi\colon C_*\to C_*$ determined by $V$ is the chain map given by
\[\Phi(c)=c+\partial \Flow(c)+\Flow(\partial c),\]
where $\Flow\colon C_*\to C_{*+1}$ is the unique linear map defined on the basis elements $\sigma \in \chainbasis{*}$ as 
\[
\Flow(\sigma)=
\begin{dcases*}
-\textstyle\frac1{\IP{\partial \tau}{\sigma}} \cdot \tau & if $\sigma$ is contained in a pair $(\sigma,\tau)\in V$, \\
0 & otherwise.
\end{dcases*}
\]
\end{definition}
  Note that, by construction, the map $\Flow$ is a chain homotopy between the identity and the flow $\Phi$.
In particular, if $c$ is a cycle, then the flow reduces to $\Phi(c)=c+\partial \Flow(c)$ and therefore acts on each homology class of the chain complex by a change of representative cycle.
  Moreover, if $f\colon \chainbasis{*}\to \mathbb{R}$ is an algebraic Morse function, then, by construction, the associated flow $\Phi$ acts for any $r \in \mathbb{R}$ on the subcomplex of $C_*$ spanned by the sublevel set $\sublevel{f}{r}=f^{-1}(-\infty,r]$.

Forman \cite{MR1612391} proved that the sequence $(\Phi^n)_n$ stabilizes for the cellular chain complex of a finite CW-complex.
This generalizes to our setting of arbitrary finite chain complexes, and we denote the \emph{stabilized flow} by $\Phi^\infty=\Phi^{r}$, where $r$ is a large enough natural number:

\begin{proposition}
 There exists an $r\in \mathbb{N}$ such that for all $s\geq r$ we have $\Phi^s=\Phi^{r}$.
\end{proposition}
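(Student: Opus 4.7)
The plan is to invoke an algebraic Morse function $f\colon \chainbasis{*}\to\mathbb{R}$ with algebraic gradient $V$ and exploit its monotonicity to control the evolution of $\Phi^n(c)$. The crucial consequence of \cref{def:alg-morse-fct} is that for any gradient pair $(\sigma,\tau)\in V$ and any facet $\mu\neq \sigma$ of $\tau$, one has $f(\mu)<f(\tau)=f(\sigma)$ strictly: monotonicity gives $f(\mu)\leq f(\tau)$, and equality would make $(\mu,\tau)$ a facet pair with equal $f$-value, forcing $(\mu,\tau)\in V$ and contradicting the disjointness of the pairs in $V$.

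First, I would compute $\Phi(\sigma)=\sigma+\partial \Flow(\sigma)+\Flow(\partial \sigma)$ on every basis element $\sigma\in\chainbasis{*}$, splitting into cases according to whether $\sigma$ is critical, a gradient facet, or a gradient cofacet. Using the observation above, the fact that $\Flow$ is supported on gradient facets, and that gradient cofacets share the $f$-value of their paired gradient facets, each case yields a formula $\Phi(\sigma)=\epsilon_\sigma\,\sigma+c_\sigma$ with $\epsilon_\sigma\in\{0,1\}$ (equal to $1$ precisely when $\sigma$ is critical) and $c_\sigma$ supported on basis elements of strictly smaller $f$-value. Consequently, $\Phi$ preserves the sublevel filtration $C^{\leq r}=\spann{\{\sigma\in\chainbasis{*}\mid f(\sigma)\leq r\}}$, and on each graded quotient $C^{\leq r}/C^{<r}$ it induces the projection onto critical basis elements at level $r$; in particular, it is idempotent there.

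To turn this into stabilization of $\Phi^n$ itself, I would use a $V$-path argument in the spirit of Forman. A $V$-path is an alternating sequence $\sigma_0,\tau_0,\sigma_1,\tau_1,\dots$ with $(\sigma_i,\tau_i)\in V$ and $\sigma_{i+1}$ a facet of $\tau_i$ distinct from $\sigma_i$. The key observation forces $f(\sigma_0)>f(\sigma_1)>\cdots$ strictly, so every $V$-path has length at most the number of distinct $f$-values on $\chainbasis{*}$, which is finite. Expanding $\Phi^n=(\op{id}+\partial \Flow+\Flow \partial)^n$ and interpreting each nonzero monomial as the trace of a $V$-path of length at most $n$, one obtains that $\Phi^n(\sigma)$ ceases to change once $n$ exceeds the maximum $V$-path length, and the statement then follows by linearity.

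The main obstacle I anticipate is converting the informal picture "each application of $\Phi$ advances by at most one $V$-step" into a rigorous bound while correctly tracking the coefficients produced by the alternating $\partial \Flow$ and $\Flow \partial$ terms. A tempting alternative is to combine Step~1 with a finite induction on the number of distinct $f$-values, using that $\Phi(c)|_r$ equals the critical projection of $c|_r$ already after one iteration; however, this induction is subtle because the recursion $d^{(n+1)}=e+\Phi(d^{(n)})$ governing the part below the top level carries a nonzero source term $e=\Phi(\pi_r c)|_{<r}$, and termination of this recursion ultimately has to be deduced from the same $V$-path length bound. Either way, the essential input is the finite $V$-path length forced by the monotonicity of $f$ together with the defining property of the algebraic gradient.
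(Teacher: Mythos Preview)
Your approach is the one the paper indicates: it does not spell out a proof but says Forman's argument generalizes, pointing to \cref{stabilized-critical} as the key ingredient. The obstacle you anticipate dissolves once you note that $\op{im}\Flow$ (the span of gradient cofacets) is $\Phi$-invariant and that your key inequality makes $\Phi|_{\op{im}\Flow}$ strictly $f$-level-lowering, hence nilpotent, so the telescoping formula $\Phi^{r+1}(\sigma)=\sigma+\sum_{k\le r}\Phi^{k}(\Flow(\partial\sigma))$ of \cref{stabilized-critical} stabilizes without any monomial-by-monomial bookkeeping.
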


The proof of this proposition uses the following lemma, which is a straightforward generalization of the corresponding statement in \cite{MR1612391}, and is of independent interest to us.
\begin{lemma}
  \label{stabilized-critical}
   Let $\sigma\in \chainbasis{n}$ be critical.
  Then for all $r\in \mathbb{N}$ the iterated flow $\Phi^{r+1}(\sigma)$ is given~by 
  \[\Phi^{r+1}(\sigma)=\sigma+w+\Phi(w)+\dots+\Phi^{r}(w),\]
   where $w=\Flow(\partial\sigma)$.
  Moreover, we have $\Phi^s(w)\in \op{im} \Flow$ for all $s\in \mathbb{N}$, and $\Flow(\Phi^{r+1}(\sigma))=0$.
  \end{lemma}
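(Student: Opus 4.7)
The plan is to reduce the entire lemma to the single algebraic fact that $\Flow\circ\Flow=0$, after which the three claims follow by a short induction and some bookkeeping. Everything is linear, so I only need to verify identities on the generators $w$, $\Phi^s(w)$, and $\sigma$ itself.

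First I would verify $\Flow\circ\Flow=0$ on each basis element: if $\mu$ is the facet in a gradient pair $(\mu,\nu)\in V$, then $\Flow(\mu)$ is a scalar multiple of $\nu$; the disjointness condition built into \cref{def:alg-morse-fct} ensures that $\nu$ is not the facet of any pair, so $\Flow(\nu)=0$. All remaining basis elements (critical elements and gradient cofacets) are killed by $\Flow$ immediately. A direct consequence, which I would highlight as a standing observation, is that for any $c\in\op{im}\Flow$ one has $\Flow(c)=0$, and hence $\Phi(c)=c+\Flow(\partial c)$.

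For the first claim I would proceed by induction on $r$. Since $\sigma$ is critical, $\Flow(\sigma)=0$, giving $\Phi(\sigma)=\sigma+\Flow(\partial\sigma)=\sigma+w$, which is the base case $r=0$. Assuming the formula at stage $r$, applying the linear operator $\Phi$ to both sides yields
\[
\Phi^{r+2}(\sigma)=\Phi(\sigma)+\Phi(w)+\Phi^2(w)+\dots+\Phi^{r+1}(w)=\sigma+w+\Phi(w)+\dots+\Phi^{r+1}(w),
\]
which closes the induction. For the second claim I would argue that $\op{im}\Flow$ is $\Phi$-invariant: for $c\in\op{im}\Flow$, the identity $\Phi(c)=c+\Flow(\partial c)$ displays $\Phi(c)$ as a sum of two elements of $\op{im}\Flow$. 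Since $w=\Flow(\partial\sigma)\in\op{im}\Flow$, induction gives $\Phi^s(w)\in\op{im}\Flow$ for all $s$.

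Finally, for $\Flow(\Phi^{r+1}(\sigma))=0$, I apply $\Flow$ to the closed form just obtained. The term $\Flow(\sigma)$ vanishes because $\sigma$ is critical, and each term $\Flow(\Phi^s(w))$ vanishes because $\Phi^s(w)\in\op{im}\Flow$ together with the standing observation $\Flow\circ\Flow=0$. I do not anticipate a real obstacle; the one place to be careful is the verification of $\Flow\circ\Flow=0$ on gradient cofacets, where the disjointness hypothesis in the definition of an algebraic gradient must be invoked explicitly—without it, a cofacet could be reused as a facet and the identity would fail.
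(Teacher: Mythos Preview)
Your proof is correct. The paper does not give an explicit proof of this lemma, stating only that it is a straightforward generalization of Forman's original argument; your approach via the identity $\Flow\circ\Flow=0$ (from disjointness of the gradient pairs) and the short induction is precisely that standard argument.
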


Based on the concept of a discrete flow, Forman \cite{MR1612391} considered the subcomplex of the cellular chain complex given by the flow invariant chains and proved the following proposition, which generalizes to our setting.
Consider the subcomplex of $\Phi$-invariant chains,
\[C_*^\Phi=\{c\in C_*\mid \Phi(c)=c\}.\]

\begin{proposition}%
\label{characterization-inv-chains}
The $\Phi$-invariant chains are spanned by the image of the critical basis elements under the stabilized flow $C_n^\Phi=\op{span}\{\Phi^\infty(\sigma)\mid \sigma\in \chainbasis{n} \text{ critical}\}$.
\end{proposition}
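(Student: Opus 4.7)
The plan is to prove both inclusions in $C_n^\Phi = \op{span}\{\Phi^\infty(\sigma)\mid \sigma\in \chainbasis{n}\text{ critical}\}$. For $\supseteq$, I observe that stabilization immediately implies $\Phi\circ\Phi^\infty = \Phi^\infty$: once $\Phi^r$ has stabilized, $\Phi(\Phi^\infty(\sigma)) = \Phi^{r+1}(\sigma) = \Phi^r(\sigma) = \Phi^\infty(\sigma)$ for every critical $\sigma$, placing $\Phi^\infty(\sigma)$ in $C_n^\Phi$.

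For $\subseteq$, note first that stabilization and linearity make $\Phi^\infty\colon C_*\to C_*$ a well-defined linear operator that restricts to the identity on $C_*^\Phi$, since $\Phi(c) = c$ implies $\Phi^s(c) = c$ for all $s$. Therefore any $c\in C_n^\Phi$ admits the expansion
\[
c = \Phi^\infty(c) = \sum_{\sigma\in \chainbasis{n}}\IP{c}{\sigma}\,\Phi^\infty(\sigma),
\]
and the task reduces to showing that $\Phi^\infty(\sigma)$ lies in $\op{span}\{\Phi^\infty(\sigma')\mid \sigma'\text{ critical}\}$ for every non-critical $\sigma$.

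The central observation is that $\Phi$ strictly decreases $f$-values on non-critical basis elements, where $f$ is any algebraic Morse function with gradient $V$. This follows from the axiom that a facet pair $(\rho,\psi)$ satisfies $f(\rho)=f(\psi)$ only when $(\rho,\psi)\in V$: a gradient facet $\mu$ paired with $\eta$ has every other facet $\rho\neq\mu$ of $\eta$ at $f$-value strictly below $f(\mu)=f(\eta)$, and every facet of $\mu$ strictly below $f(\mu)$ (since $\mu$ cannot simultaneously be a gradient cofacet, as $V$ is disjoint). Direct computation using $\Flow(\mu)=-\eta/\IP{\partial\eta}{\mu}$ then yields
\[
\Phi(\mu) = -\sum_{\rho\neq\mu}\tfrac{\IP{\partial\eta}{\rho}}{\IP{\partial\eta}{\mu}}\,\rho + \Flow(\partial\mu),
\]
which is supported strictly below $f(\mu)$; analogously, $\Phi(\eta) = \Flow(\partial\eta - \IP{\partial\eta}{\mu}\mu)$ is supported strictly below $f(\eta)$.

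I would now prove the desired claim for non-critical $\sigma$ by induction on $f(\sigma)$. In the base case $\sigma$ lies at the minimal $f$-value of $\chainbasis{*}$; since no basis element sits strictly below, the formulas above force $\Phi(\sigma)=0$, hence $\Phi^\infty(\sigma)=0$. For the inductive step, $\Phi(\sigma)$ is supported at $f$-values strictly below $f(\sigma)$, so expanding $\Phi^\infty(\sigma) = \Phi^\infty(\Phi(\sigma))$ in the basis gives a linear combination of terms $\Phi^\infty(\sigma'')$ with $f(\sigma'')<f(\sigma)$; critical such $\sigma''$ contribute generators directly, and non-critical ones fall under the induction hypothesis. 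This establishes the key claim and completes the proof. The main obstacle — securing the strict descent of $\Phi$ on non-critical basis elements — is resolved cleanly by the algebraic Morse function axiom, leaving the remainder as a straightforward induction on $f$-values.
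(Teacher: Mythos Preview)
The paper does not supply its own proof of this proposition; it attributes the result to Forman and simply asserts that the argument carries over to the algebraic setting. Your proof is correct and self-contained.

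Your argument---showing that $\Phi$ sends every non-critical basis element into the span of basis elements at strictly smaller $f$-value, and then inducting on the finitely many $f$-values to conclude that $\Phi^\infty(\sigma)$ lies in the span of the critical images---is essentially Forman's descent argument, transported directly to the algebraic Morse setting. The paper instead packages part of the same content into \cref{stabilized-critical}, which records the explicit iterated formula $\Phi^{r+1}(\sigma)=\sigma+w+\Phi(w)+\cdots+\Phi^r(w)$ for \emph{critical} $\sigma$ together with $\Flow(\Phi^\infty(\sigma))=0$; that lemma is what the paper actually invokes downstream (in the proofs of \cref{comp-inv-chains-ref} and \cref{flowinf-supported-descending}). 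However, \cref{stabilized-critical} by itself does not establish the inclusion $C_n^\Phi\subseteq\op{span}\{\Phi^\infty(\sigma)\mid\sigma\text{ critical}\}$---one still needs a descent argument or a dimension count of the kind you give. Your route is arguably more direct for this particular statement, while the paper's formulation via \cref{stabilized-critical} is tailored to the later applications where only the structure of $\Phi^\infty(\sigma)$ for critical $\sigma$ is needed.
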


We now relate the flow invariant chains of an algebraic gradient to those of its subgradients.
A proof of the following statement can be found in \cref{appendix-alg-flow}.
\begin{proposition}
\label{comp-inv-chains-ref}
Let $(C_*,\chainbasis{*})$ be a based chain complex and $W\subseteq V$ two algebraic gradients on $\chainbasis{*}$.
Consider the flows $\Psi,\Phi\colon C_*\to C_*$ determined by $W$ and $V$, respectively.
Then any~$\Phi$-invariant chain is also $\Psi$-invariant, i.e., we have $C_*^\Phi\subseteq C_*^\Psi$.
\end{proposition}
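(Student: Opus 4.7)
The plan is to reduce the inclusion $C_*^\Phi\subseteq C_*^\Psi$ to a statement about the supports of $\Phi$-invariant chains. Specifically, I will show that every chain in $C_*^\Phi$ has support contained in the union of the $V$-critical basis elements and the $V$-gradient cofacets, and then use $W\subseteq V$ to conclude that both $\Flow_W(c)$ and $\Flow_W(\partial c)$ vanish, so that the definition of $\Psi$ immediately gives $\Psi(c)=c$.

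For the support claim, the route I would take is the following. By \cref{characterization-inv-chains}, $C_*^\Phi$ is spanned by the chains $\Phi^\infty(\sigma)$ for $V$-critical $\sigma$, and \cref{stabilized-critical} expresses each such chain as $\sigma+w+\Phi(w)+\dots+\Phi^r(w)$ with $w=\Flow(\partial\sigma)$ and each iterate $\Phi^s(w)\in\op{im}\Flow$. Since $\op{im}\Flow$ is, directly from \cref{def-alg-flow}, spanned by $V$-gradient cofacets, this forces $\Phi^\infty(\sigma)$, and hence every chain in $C_*^\Phi$, to have support inside the union of $V$-critical basis elements and $V$-gradient cofacets.

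Writing $\Flow_V,\Flow_W$ for the chain homotopies of \cref{def-alg-flow} associated to $\Phi$ and $\Psi$, the next step is to verify that $\Flow_W$ annihilates every basis element in this support set. A $V$-critical basis element is automatically $W$-critical since $W\subseteq V$. A $V$-gradient cofacet $\tau$ is already the second component of some pair in $V$, and by the disjointness of the facet pairs constituting $V$ it cannot be the first component of any other pair in $V$, hence not of any pair in $W\subseteq V$; thus $\tau$ is not a $W$-gradient facet. This gives $\Flow_W(c)=0$ for every $c\in C_*^\Phi$. The same conclusion applies to $\partial c$, because the chain-map property $\Phi\partial=\partial\Phi$ shows that $\partial c$ is again $\Phi$-invariant and hence has the same support structure. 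Substituting into $\Psi(c)=c+\partial\Flow_W(c)+\Flow_W(\partial c)$ yields $\Psi(c)=c$, placing $c$ in $C_*^\Psi$.

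The main obstacle is extracting the support characterization of $\Phi$-invariant chains cleanly from \cref{stabilized-critical}, in particular observing that the iterates $\Phi^s(w)$ remain in $\op{im}\Flow$ and therefore do not reintroduce critical terms beyond $\sigma$ itself. Once this is in place, the disjointness of algebraic-gradient pairs and the chain-map identity close the argument with only a short computation.
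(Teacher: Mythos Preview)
Your proposal is correct and follows essentially the same route as the paper: reduce via \cref{characterization-inv-chains} to the generators $\Phi^\infty(\sigma)$, use \cref{stabilized-critical} to see that their support lies among $V$-critical elements and $V$-gradient cofacets, and then invoke $W\subseteq V$ together with disjointness of gradient pairs to conclude that $\Flow_W$ kills both $c$ and $\partial c$. The only cosmetic difference is how $\partial c$ is handled: the paper derives $\Flow_V(\partial c)=0$ algebraically from the identity $\Phi(c)=c$, while you obtain it by noting that $\partial c\in C_*^\Phi$ via the chain-map property---both arguments are equally short and lead to the same support conclusion.
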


Note that the flow $\Phi$ can be written as a sum of flows
$
\Phi = \sum_{(a,b)\in V}\Phi^{(a,b)}-(\card{V}-1)\cdot\op{id},
$
where $\Phi^{(a,b)}$ is the flow determined by the algebraic gradient $\{(a,b)\}$ on $\chainbasis{*}$.
Together with \cref{comp-inv-chains-ref}, this proves the following.
\begin{corollary}
\label{inv-iff-vecflow-inv}
Let $V$ be an algebraic gradient on $\chainbasis{*}$ with associated flow $\Phi\colon C_*\to C_*$.
Then a chain is $\Phi$-invariant if and only if it is $\Phi^{(a,b)}$-invariant for every pair~$(a,b)\in V$.
\end{corollary}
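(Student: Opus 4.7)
The plan is to exploit the displayed identity $\Phi = \sum_{(a,b)\in V}\Phi^{(a,b)} - (\card{V}-1)\cdot\op{id}$ for the forward direction, i.e., showing that $\Phi^{(a,b)}$-invariance for every pair implies $\Phi$-invariance, and to apply \cref{comp-inv-chains-ref} for the converse. Both directions are essentially formal once these two ingredients are in place.

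For the ``only if'' direction, I would fix a chain $c \in C_*^\Phi$ and observe that for any pair $(a,b) \in V$, the singleton collection $W = \{(a,b)\}$ is itself an algebraic gradient on $\chainbasis{*}$ (the defining properties for an algebraic gradient are closed under passing to subsets, since the monotonicity condition only constrains facet pairs in the gradient). Applying \cref{comp-inv-chains-ref} to the inclusion $W \subseteq V$ gives $C_*^\Phi \subseteq C_*^{\Phi^{(a,b)}}$, so $c$ is $\Phi^{(a,b)}$-invariant. Ranging over all $(a,b) \in V$ yields one direction of the equivalence.

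For the ``if'' direction, I would assume $\Phi^{(a,b)}(c) = c$ for every $(a,b) \in V$. Substituting into the sum decomposition gives
\[
\Phi(c) = \sum_{(a,b)\in V} \Phi^{(a,b)}(c) - (\card{V}-1)\cdot c = \card{V}\cdot c - (\card{V}-1)\cdot c = c,
\]
so $c \in C_*^\Phi$.

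There is no real obstacle here beyond trusting the sum decomposition stated just before the corollary; once that identity and \cref{comp-inv-chains-ref} are available, the corollary falls out by a two-line argument in each direction. The only care needed is the mild remark that a single pair $\{(a,b)\}\subseteq V$ still qualifies as an algebraic gradient, so that \cref{comp-inv-chains-ref} applies.
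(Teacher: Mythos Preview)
Your proposal is correct and follows exactly the approach the paper indicates: the sum decomposition $\Phi = \sum_{(a,b)\in V}\Phi^{(a,b)} - (\card{V}-1)\cdot\op{id}$ handles the ``if'' direction, and \cref{comp-inv-chains-ref} applied to each singleton $W=\{(a,b)\}\subseteq V$ handles the ``only if'' direction. Your extra remark that a single pair still qualifies as an algebraic gradient is a reasonable sanity check, and the paper implicitly uses this as well.
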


For a based chain complex $(C_*,\chainbasis{*})$ and algebraic gradient $V$ on $\chainbasis{*}$, we denote by 
$
  \gradbound{n}{V}=\{\partial b\mid \exists\ (a,b)\in V \text{ with }a\in \chainbasis{n}\}
$
the set of gradient cofacet boundaries in degree $n$.
We say that two cycles $z,z'\in Z_n$ are \emph{$V$-homologous} if there exists an element $\partial e\in \spann{\gradbound{n}{V}}$ such that $z-z'=\partial e$.
Observe that for any cycle $z\in Z_n$, the cycle $\Phi(z)=z+\partial \Flow(z)$ is~$V$-homologous to $z$.
A proof of the following statement can be found in \cref{appendix-alg-flow}.
\begin{proposition}
\label{phiinv-equ-nogradientfacets}
Let $V$ be an algebraic gradient on $\chainbasis{*}$ with associated flow $\Phi\colon C_*\to C_*$.
Then a cycle $z\in Z_n$ is $\Phi$-invariant if and only if it contains no gradient facets of~$V$.
Moreover, if $z'\in Z_n$ is any $\Phi$-invariant cycle that is $V$-homologous to $z$, then~$z=z'$.
\end{proposition}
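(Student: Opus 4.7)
I plan to prove the two assertions separately using the tools developed in this section.

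\emph{Equivalence characterization.} By \cref{inv-iff-vecflow-inv}, a cycle $z$ is $\Phi$-invariant if and only if it is $\Phi^{(a,b)}$-invariant for every pair $(a,b) \in V$, reducing the first assertion to a single-pair calculation. For a fixed pair $(a,b) \in V$ and cycle $z$, the definition of the flow (\cref{def-alg-flow}) together with $\partial z = 0$ yields
\[\Phi^{(a,b)}(z) - z \;=\; \partial \Flow^{(a,b)}(z) \;=\; -\frac{\IP{z}{a}}{\IP{\partial b}{a}}\,\partial b.\]
Since $(a,b)$ is a facet pair, $\IP{\partial b}{a}\neq 0$ and in particular $\partial b \neq 0$, so this expression vanishes precisely when $\IP{z}{a} = 0$. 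Ranging over all pairs $(a,b) \in V$ then gives the first assertion.

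\emph{Uniqueness setup.} Let $z, z' \in Z_n$ be $\Phi$-invariant cycles that are $V$-homologous, so that $z - z' = \partial e$ for some $\partial e \in \spann{\gradbound{n}{V}}$. Expanding $\partial e = \sum_{(a,b) \in V,\ a \in \chainbasis{n}} \mu_b\, \partial b$ and replacing $e$ by $\sum \mu_b\, b$, we may assume that $e$ is a linear combination of the degree-$(n+1)$ gradient cofacets with the same boundary. The goal is to show $e = 0$. By the first assertion, both $z$ and $z'$, and hence also $\partial e$, have zero coefficient on every gradient facet, yielding the linear system
\[\sum_{b'} \mu_{b'}\,\IP{\partial b'}{a} \;=\; 0 \qquad \text{for every $(a, b) \in V$ with $a \in \chainbasis{n}$.}\]

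\emph{Invertibility via monotonicity.} The main obstacle is to show that this system admits only the trivial solution. I plan to argue that the coefficient matrix $M$, indexed by the relevant pairs of $V$, is upper triangular with nonzero diagonal once the pairs are ordered by any total refinement of the order on the values $f(b)$. The diagonal entry $\IP{\partial b}{a}$ is nonzero since $(a,b)$ is a facet pair. For two distinct pairs $(a,b), (a',b') \in V$, suppose the off-diagonal entry $\IP{\partial b'}{a} \neq 0$, so that $a$ is a facet of $b'$. Monotonicity of $f$ then gives $f(b) = f(a) \leq f(b')$, and this inequality must be strict: if $f(b) = f(b')$, then $(a, b')$ would be a facet pair with $f(a) = f(b')$, so by the characterization of gradient pairs in \cref{def:alg-morse-fct} we would have $(a,b') \in V$, contradicting the disjointness of $V$ since $a$ already lies in the distinct pair $(a, b)$. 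Hence $f(b) < f(b')$ whenever the corresponding off-diagonal entry is nonzero, so $M$ is upper triangular with nonzero diagonal, hence invertible; this forces $\mu = 0$ and therefore $z = z'$.
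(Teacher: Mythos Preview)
Your proof is correct and follows essentially the same strategy as the paper. For the first assertion, you use \cref{inv-iff-vecflow-inv} as a two-way bridge and do a direct single-pair computation, whereas the paper invokes \cref{characterization-inv-chains} and \cref{stabilized-critical} for the ``only if'' direction; since \cref{inv-iff-vecflow-inv} itself rests on those results (via \cref{comp-inv-chains-ref}), the dependency structure is the same, though your presentation is a bit more streamlined. For the uniqueness assertion, your triangularity argument via monotonicity of $f$ is exactly the content of \cref{sorting-implies-nreduced} and \cref{gradbound-independent}, which the paper factors out as separate lemmas and then applies by observing that the projection $\projfacets{n}$ onto the span of gradient facets is an isomorphism and kills the $\Phi$-invariant difference $z-z'$.
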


 Now let $(C_*,\chainbasis{*}=\sigma_1<\dots<\sigma_l)$ be a based chain complex with an elementwise filtration, and let $R=D\cdot \VMatrix$ be a reduction of the filtration boundary matrix. %
 In order to relate the flow determined by the reduction gradient to the flow determined by the zero persistence apparent pairs gradient, and therefore to discrete Morse theory (\cref{apparent-equal-gradient,flowinf-supported-descending}), we call a reduction matrix $\VMatrix$ \emph{death-compatible} %
 if for every death index $j$ and non-zero entry $\VMatrix_{i,j}\neq 0$ we also have that $i$ is a death index.
 
 \begin{remark}
  \label{reduction-deathcomp}
  Both algorithms in \cref{prelim-pers-hom} compute a death-compatible reduction matrix~$\VMatrix$.
 \end{remark}
 
 The following is a direct consequence of \cref{apparent-pairs-contained-reduction-gradient} and the definitions.

 \begin{lemma}
  \label{apparentpairs-flow-differentbasis}
If the reduction matrix $S$ is apparent pairs and death-compatible, then
the flows determined by the apparent pairs gradient of the elementwise filtration as an algebraic gradient on $\chainbasis{*}$ and as an algebraic gradient on $\Omega_{*}$, respectively, coincide.
\end{lemma}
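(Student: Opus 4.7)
My plan is to reduce the claim to the equality of the two chain homotopies $\Flow_1, \Flow_2 \colon C_* \to C_{*+1}$ obtained from the apparent pairs gradient, once regarded as a gradient on $\chainbasis{*}$ and once on $\Omega_*$. Since in both cases the associated flow has the form $\op{id} + \partial \Flow + \Flow\,\partial$, establishing $\Flow_1 = \Flow_2$ as a linear map on $C_*$ immediately yields the claimed coincidence of flows. I would verify this equality by evaluating both sides on the basis $\Omega_*$.

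First, I would unpack what the hypotheses buy us. By \cref{app-implies-pers}, for every apparent pair $(\sigma_i,\sigma_j)$ the index $i$ is a birth and $j$ a death index; apparent pairs compatibility then forces $\VMatrix_j$ to be the single-entry column $\sigma_j$, so $\tau_j=\sigma_j$, while $\tau_i=\sigma_i$ by the birth case of the reduction basis. Hence the apparent pairs gradient is identified with the same set of pairs in $\Omega_*\times\Omega_*$, consistent with \cref{apparent-pairs-contained-reduction-gradient}.

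The key calculation is a pivot-coefficient identity. For each apparent pair $(\sigma_i,\sigma_j)$ I would prove
\[
\IP{\partial \tau_j}{\tau_i}_{\Omega_*} = \IP{\partial \sigma_j}{\sigma_i}_{\chainbasis{*}},
\]
which directly gives $\Flow_1(\sigma_i)=\Flow_2(\tau_i)$ on every apparent-pair facet. The change-of-basis matrix $T$ writing $\Omega_*$ in terms of $\chainbasis{*}$ agrees with $\VMatrix$ on death columns and with the identity on the others; since $\VMatrix$ is full-rank upper triangular with unit diagonal (the reduction algorithms never alter diagonal entries), so is $T$ and hence $T^{-1}$. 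Expanding $\partial\sigma_j$ in the $\Omega_*$ basis, the coefficient of $\tau_i$ is $\sum_{k\ge i}(T^{-1})_{i,k}\,\IP{\partial\sigma_j}{\sigma_k}_{\chainbasis{*}}$, and the defining property of an apparent pair—$\sigma_i$ is the maximal facet of $\sigma_j$—makes every summand with $k>i$ vanish, leaving only the diagonal term.

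Finally, I would verify that both $\Flow_1$ and $\Flow_2$ vanish on each element of $\Omega_*$ that is not an apparent-pair facet. If $k$ is a birth or essential index outside any apparent pair, then $\tau_k=\sigma_k$ and neither gradient acts on it. If $k$ is a death index, then $\tau_k=\VMatrix_k$, and death-compatibility guarantees that the support of $\VMatrix_k$ consists entirely of basis elements $\sigma_m$ whose indices are also deaths; since birth and death indices are disjoint, no such $\sigma_m$ is an apparent-pair facet, so $\Flow_1(\VMatrix_k)=\sum_m \VMatrix_{m,k}\,\Flow_1(\sigma_m)=0$, while $\tau_k$ itself is not a gradient facet in $\Omega_*$ either, giving $\Flow_2(\tau_k)=0$. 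The hard part will be the careful bookkeeping, namely tracking which gradient is active on which basis when the same pairs are being read on both—but death-compatibility is precisely the condition preventing the reduction columns $\VMatrix_k$ from leaking onto apparent-pair facets and corrupting the action of $\Flow_1$ on $\tau_k$, and together with the triangularity argument above this makes the element-by-element check essentially immediate.
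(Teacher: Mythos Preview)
Your proof is correct and amounts to a careful unpacking of what the paper records only as ``a direct consequence of \cref{apparent-pairs-contained-reduction-gradient} and the definitions''; no further argument is given there, and your elementwise verification of $\Flow_1=\Flow_2$ on the basis $\Omega_*$ is exactly what that sentence is pointing to. One small remark: your appeal to $S$ having unit diagonal is not part of the hypotheses (the lemma assumes only full rank upper triangular, apparent-pairs compatible, death-compatible) and is in fact unnecessary---for the coefficient identity you only need $(T^{-1})_{i,i}=1$ at the birth index $i$ of an apparent pair, which already follows from $\tau_i=\sigma_i$ and the upper-triangularity of $T$.
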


\subsection{Relating the algebraic flow and lexicographically minimal cycles}
\label{algflow-lexoptcyc}
We now relate the flow invariant cycles determined by a reduction gradient to lexicographically minimal homologous cycles.
Let $(C_*,\chainbasis{*}=\sigma_1<\dots<\sigma_l)$ be a based chain complex with an elementwise filtration, and let $R=D\cdot \VMatrix$ be a reduction of the filtration boundary matrix. %
The following result \cite[Lemmas 3.3 and 3.5]{MR4517098} provides an equivalent condition for minimality.
\begin{proposition}%
\label{lexmin-iff-nobirthspx}
 A cycle is lexicographically minimal with respect to the elementwise filtration if and only if its support contains only death elements and essential elements.
\end{proposition}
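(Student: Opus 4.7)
The plan is to prove both implications directly using the decomposition of cycles and boundaries induced by the reduction $R = D\cdot S$. The key inputs are: (i) the direct sum decomposition recalled earlier in \cref{prelim-pers-hom}, which in degree $n$ gives $B_n = \bigoplus_{j\ \text{death},\ \dim\sigma_j = n+1}\langle R_j\rangle$, so every boundary is a unique linear combination of the $R_j$; and (ii) the fact that the pivot $\pivot{R_j}$ is a birth index by definition, and that distinct death columns have distinct pivots, so the top term of any combination $\sum_j\beta_j R_j$ cannot be cancelled.

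For the forward direction I would argue by contraposition. Suppose some birth element $\sigma_i$ lies in $\supp{\chainbasis{*}}{c}$, and let $j$ be the unique death index with $\pivot{R_j}=i$. Since $R_j=\partial S_j$ is a boundary, the cycle $c' = c - (\lambda/\mu)\,R_j$, with $\lambda=\IP{c}{\sigma_i}$ and $\mu=\pivotentry{R_j}$, is homologous to $c$ and has vanishing coefficient at $\sigma_i$. Because the $\chainbasis{*}$-support of $R_j$ lies in indices at most $i$, the chains $c$ and $c'$ agree at every index strictly above $i$, so $\sigma_i$ is the maximum of the symmetric difference $\supp{\chainbasis{*}}{c}\triangle\supp{\chainbasis{*}}{c'}$ and lies in $\supp{\chainbasis{*}}{c}\setminus\supp{\chainbasis{*}}{c'}$. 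Hence $c'\sqsubset c$, so $c$ is reducible.

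For the reverse direction, assume $\supp{\chainbasis{*}}{c}$ contains no birth elements, and let $c'$ be any cycle homologous to $c$ with $c'\neq c$. By (i), the boundary $c'-c$ can be written uniquely as $\sum_j\beta_j R_j$ with at least one $\beta_j\neq 0$. Set $i^* = \max\{\pivot{R_j}\mid \beta_j\neq 0\}$ and let $j^*$ be the unique death index with $\pivot{R_{j^*}}=i^*$ and $\beta_{j^*}\neq 0$. Each $R_j$ with $\beta_j\neq 0$ has support in indices at most $i^*$, so $c$ and $c'$ agree at every index strictly above $i^*$. At index $i^*$, pivot distinctness forces $\IP{c'-c}{\sigma_{i^*}} = \beta_{j^*}\pivotentry{R_{j^*}}\neq 0$, while $\IP{c}{\sigma_{i^*}}=0$ by the no-birth assumption on $c$. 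Hence $\sigma_{i^*}\in\supp{\chainbasis{*}}{c'}\setminus\supp{\chainbasis{*}}{c}$ is the maximum of the symmetric difference of supports, so $c\sqsubset c'$. This shows that no cycle in the homology class of $c$ is strictly smaller than $c$, so $c$ is lexicographically minimal.

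The main technical point is to verify that pivot distinctness indeed rules out cancellation of the top term in $\sum_j\beta_j R_j$; everything else is a direct bookkeeping calculation. Notably, the argument works uniformly over any coefficient field and does not require the reduction matrix $S$ to be apparent-pairs compatible or death-compatible, since only the columns $R_j$ of the reduced matrix are ever manipulated.
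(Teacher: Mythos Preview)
Your proof is correct. The forward direction is exactly the elimination step one would expect, and the reverse direction is sound: pivot distinctness of the reduced columns $R_j$ guarantees that the coefficient of $\sigma_{i^*}$ in $\sum_j\beta_j R_j$ equals $\beta_{j^*}\pivotentry{R_{j^*}}\neq 0$, so the top of the symmetric difference lands in $\supp{\chainbasis{*}}{c'}$. Your remark that neither apparent-pairs compatibility nor death-compatibility of $S$ is needed is also right; only the reduced matrix $R$ enters the argument.

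The paper takes a different route. It first establishes that lexicographic minimality of a cycle is equivalent to invariance under the reduction flow (\cref{invpivotflow-equ-lexminsimplicial}), which in turn is equivalent to invariance under the flow of the $n$-reduction gradient, and then invokes \cref{phiinv-equ-nogradientfacets} to translate flow invariance into the absence of gradient facets, which for the $n$-reduction gradient are precisely the birth elements. The underlying combinatorics---distinct pivots preventing cancellation at the top index---is the same as in your argument (it appears inside the proof of \cref{irred-nogradientfacets}), but the paper packages it through the algebraic-flow framework, which is the unifying theme of the article and what later connects to the descending complex. Your argument is more elementary and self-contained, needing only the column structure of $R$ and the direct-sum description of $B_n$; as a bonus, your reverse direction actually shows that every other homologous cycle is strictly larger, so you get the uniqueness statement of \cref{unique-minimizers} for free.
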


We provide another characterization in terms of the algebraic flow determined by the reduction gradient of $\VMatrix$ (defined on the reduction basis $\Omega_*$), which we call the \emph{reduction flow}.

\begin{proposition}
\label{invpivotflow-equ-lexminsimplicial}
 Let $(C_*,\chainbasis{*}=\sigma_1<\dots<\sigma_l)$ be a based chain complex with an elementwise filtration, and 
 let $R=D\cdot \VMatrix$ be a reduction of the filtration boundary matrix by a death-compatible reduction matrix $\VMatrix$.
 Then for a cycle $z\in Z_n$ the following are equivalent:
 \begin{itemize}
  \item $z$ is lexicographically minimal with respect to the ordered basis $\chainbasis{*}$;
  \item $z$ is invariant under the reduction flow.%
 \end{itemize}
\end{proposition}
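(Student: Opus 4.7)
The plan is to reduce both conditions to the same statement about the coefficients of \emph{birth} elements. The bridge is \cref{phiinv-equ-nogradientfacets}, which characterizes flow invariance of a cycle as the absence of gradient facets in its support, and \cref{lexmin-iff-nobirthspx}, which characterizes lexicographic minimality as the absence of birth elements in the support.

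First I would identify the gradient facets of the reduction gradient on $\Omega_*$. By the definition in \cref{gradientpairs-persistentpairs}, these are the elements $\chainpivot{\Omega_*}{R_j}$ for $j$ a death index. Since $R_j = D\cdot S_j$ is expressed in the original basis $\chainbasis{*}$ and $\chainpivot{\Omega_*}{R_j} = \chainpivot{\chainbasis{*}}{R_j} = \sigma_{\pivot{R_j}}$, the set of gradient facets is exactly $\{\sigma_i \mid i \text{ is a birth index}\}$. Moreover, for birth or essential indices $i$ one has $\tau_i=\sigma_i$, so these elements coincide as members of $\chainbasis{*}$ and of $\Omega_*$.

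Next I would exploit death-compatibility of $S$ to show that the change of basis from $\chainbasis{*}$ to $\Omega_*$ leaves the coefficients of birth and essential elements unchanged. Concretely, for a death index $j$ one has $\tau_j = S_j = \sum_k S_{k,j}\sigma_k$, and death-compatibility forces $S_{k,j}\neq 0$ only when $k$ is also a death index. Therefore, if $z = \sum_i \lambda_i\sigma_i = \sum_i \mu_i\tau_i$, then for every birth or essential index $i$ we have $\lambda_i = \mu_i$, since only death-indexed $\tau_j$ can contribute to death-indexed $\sigma_k$, never to birth or essential $\sigma_i$. In particular, the birth elements in the support of $z$ with respect to $\chainbasis{*}$ coincide with those in its support with respect to $\Omega_*$.

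Finally I would assemble the equivalence. By \cref{phiinv-equ-nogradientfacets} applied to the reduction gradient on $\Omega_*$, the cycle $z$ is invariant under the reduction flow if and only if its $\Omega_*$-support contains no gradient facet, i.e., no element $\tau_i=\sigma_i$ with $i$ a birth index. By the previous paragraph, this is equivalent to the $\chainbasis{*}$-support of $z$ containing no birth element. By \cref{lexmin-iff-nobirthspx}, this in turn is equivalent to $z$ being lexicographically minimal with respect to the ordered basis $\chainbasis{*}$.

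The only subtle step is the coefficient-matching argument of the second paragraph; the hard part is noticing that death-compatibility, a condition on $S$ that looks purely computational, is precisely what guarantees the two notions of support agree on birth elements. Everything else is a direct application of results already assembled in \cref{prelim-pers-hom,section-alg-flow}.
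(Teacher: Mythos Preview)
Your argument is sound in content, but there is a circularity to flag: within this paper, the proof of \cref{lexmin-iff-nobirthspx} (given in \cref{appendix-flow-equ-lex}) actually \emph{uses} \cref{invpivotflow-equ-lexminsimplicial}. So invoking \cref{lexmin-iff-nobirthspx} here closes a loop in the paper's own logical development. The external citation attached to \cref{lexmin-iff-nobirthspx} refers to a $\mathbb{Z}/2\mathbb{Z}$ setting, so for arbitrary coefficient fields you would need an independent justification. That is not hard---one can argue directly from the fact that the nonzero reduced columns $R_j$ span $B_n$ and have pairwise distinct pivots, which are precisely the birth elements---but it should be supplied rather than quoted.

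Setting the circularity aside, your route is genuinely different from the paper's. The paper introduces an auxiliary $n$-reduction basis $\Pi_*$ (agreeing with $\chainbasis{*}$ in degree $n$ and with $\Omega_*$ in degree $n{+}1$) together with an $n$-reduction gradient, proves a self-contained equivalence (\cref{flowinv-equ-lexmin}, via \cref{irred-nogradientfacets} and \cref{phiinv-equ-nogradientfacets}), and then uses death-compatibility to identify the reduction flow with the $n$-reduction flow on cycles; since $\Pi_n=\chainbasis{n}$, the lexicographic preorder on $C_n$ is unchanged. Your coefficient-matching observation serves the same purpose as the passage to $\Pi_*$: both exploit death-compatibility to make the degree-$n$ support independent of whether one reads it in $\chainbasis{*}$ or in a reduction-type basis. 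The paper's approach is more self-contained and yields \cref{lexmin-iff-nobirthspx} as a corollary; yours is shorter once that proposition is secured independently.
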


We defer the proofs to \cref{appendix-flow-equ-lex}.

\section{Relating algebraic reduction gradients and discrete gradients}
\label{relating-gradients}
We are now ready to relate our results for the algebraic flow determined by the reduction gradient to discrete Morse theory.
In particular, we show that for a generalized discrete Morse function $f\colon K\to\mathbb{R}$, the lexicographically minimal cycles with respect to the $f$-lexicographic order on the simplices are supported on the descending complexes of $f$. In \cref{appendix-reduction-chains}, we show that the reduction chains, given as the columns of a suitable reduction matrix, are also supported on the descending complexes of $f$.
In particular, this is true of the columns of the totally reduced filtration boundary matrix, considered as cycles in the sublevel set corresponding to the pivot index.
This summarizes the relations between reductions of the filtration boundary matrix and the descending complexes of~$f$.
Recall that a discrete gradient $V$ on $K$ gives rise to a flow $\Phi\colon C_*(K)\to C_*(K)$.
\begin{lemma}
\label{flow-acts-wrap}
The flow $\Phi$ restricts to a chain map on the descending complex $\descpxV{V}\subseteq K$, i.e., for $c\in C_*(\descpxV{V})$ we also have $\Phi(c)\in C_*(\descpxV{V})$.
\end{lemma}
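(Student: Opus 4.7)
The plan is to reduce the statement to the set-theoretic observation that whenever $(\sigma,\tau)\in V$ is a gradient pair with $\sigma\in\descpxV{V}$, the cofacet $\tau$ also lies in $\descpxV{V}$. Granting this, the lemma follows directly from the flow formula $\Phi(c)=c+\partial \Flow(c)+\Flow(\partial c)$: since the boundary map preserves any subcomplex, it suffices to check that $\Flow$ maps $C_*(\descpxV{V})$ into itself. By linearity this reduces to basis elements, and by construction $\Flow(\sigma)$ is either zero (when $\sigma$ is not a gradient facet) or a scalar multiple of the cofacet of $\sigma$ under $V$, so the observation suffices.

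To prove the observation, we unpack the definition of $\descpxV{V}$ as a union of intervals of the gradient partition $\hat{V}$: the unique interval $I\in\hat{V}$ containing $\sigma$ lies in the down set of some critical singleton $\{\rho\}$ with respect to $\leq_{\hat{V}}$, and therefore $I\subseteq\descpxV{V}$. Since $(\sigma,\tau)\in V$, the simplices $\sigma$ and $\tau$ belong to a common regular interval of $\hat{V}$, which by uniqueness of the partition must coincide with $I$. Hence $\tau\in I\subseteq\descpxV{V}$, as claimed.

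No substantial obstacle is anticipated; the proof is essentially a bookkeeping exercise once the right perspective on $\descpxV{V}$ is adopted. The only minor subtlety is the interplay between $V$ viewed as a collection of regular intervals of a (generalized) discrete Morse function and $V$ viewed as a collection of facet pairs that drives the algebraic flow via \cref{top-to-alg-gradient}. For an ordinary discrete Morse gradient the two viewpoints agree and the argument above applies verbatim, while in the generalized case one can pass to a regular refinement into facet pairs without changing $\descpxV{V}$ by the remark following \cref{refinement-implies-nested-descendingcpxV}.
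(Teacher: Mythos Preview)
Your proof is correct and follows essentially the same approach as the paper: both arguments reduce to showing that $\Flow$ (and $\partial$) preserve $C_*(\descpxV{V})$, which follows because a gradient facet and its cofacet lie in the same interval of $\hat V$, hence in the same down set. Your final paragraph is unnecessary and slightly off: the flow $\Phi$ is only defined when $V$ consists of facet pairs (via \cref{top-to-alg-gradient}), so the generalized case does not arise here, and in any event a regular refinement can strictly shrink $\descpxV{V}$ (the remark after \cref{refinement-implies-nested-descendingcpxV} only asserts a collapse, not equality).
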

\begin{proof}
Recall from \cref{def-alg-flow} that $\Phi$ is given by $\Phi(c)=c+\partial \Flow(c)+\Flow(\partial c)$, where $\Flow\colon C_*(K)\to C_{*+1}(K)$ is the linear map with $\Flow(\sigma)=-\IP{\partial \tau}{\sigma}^{-1} \cdot \tau$ if $(\sigma,\tau)\in V$ and~$0$ on all other simplices.
Thus, if $\eta\in \descpxV{V}$ is any simplex, then $\partial\eta$ and $\Flow(\eta)$ are both contained in $C_*(\descpxV{V})$, by definition of the descending complex.
Therefore, if $c\in C_*(\descpxV{V})$ is any chain, then the chains $\partial c, \Flow(c)$, and~$\Flow(\partial c)$ are also contained in $C_*(\descpxV{V})$.
This shows that the chain $\Phi(c)=c+\partial \Flow(c)+\Flow(\partial c)$ is contained in $C_*(\descpxV{V})$, proving the claim.
\end{proof}

\goodbreak

The $\Phi$-invariant chains are supported on the descending complex, as illustrated in~\cref{fig:cycleandflow}.

\begin{proposition}
\label{flowinf-supported-descending}
The $\Phi$-invariant chains of $C_*(K)$ are supported on the descending complex~$\descpxV{V}$, i.e., we have $C_*^\Phi(K)\subseteq C_*(\descpxV{V})$.
\end{proposition}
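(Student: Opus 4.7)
The plan is to combine the characterization of invariant chains in \cref{characterization-inv-chains} with the invariance of $C_*(\descpxV{V})$ under $\Phi$ established in \cref{flow-acts-wrap}. By \cref{characterization-inv-chains}, the subspace $C_n^\Phi(K)$ is spanned by the chains $\Phi^\infty(\sigma)$ as $\sigma$ ranges over the critical basis elements in dimension $n$. So it suffices to verify that each such $\Phi^\infty(\sigma)$ lies in $C_*(\descpxV{V})$.

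First I would observe that every critical simplex $\sigma$ lies in $\descpxV{V}$: by \cref{def:desc-cpx}, $\descpxV{V}$ is defined as the union of the down sets of the singleton intervals $\{\sigma\}$ for $\sigma$ critical, and in particular $\{\sigma\}$ itself belongs to this down set. Hence $\sigma \in C_*(\descpxV{V})$ for every critical $\sigma$.

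Next, by \cref{flow-acts-wrap}, the flow $\Phi$ restricts to a chain map on the descending complex, so $\Phi(C_*(\descpxV{V})) \subseteq C_*(\descpxV{V})$. Iterating this containment, $\Phi^r(\sigma) \in C_*(\descpxV{V})$ for every $r \in \mathbb{N}$. Since $\Phi^\infty = \Phi^r$ for sufficiently large $r$, we conclude $\Phi^\infty(\sigma) \in C_*(\descpxV{V})$. Taking spans over all critical $\sigma$ and applying \cref{characterization-inv-chains} gives $C_*^\Phi(K) \subseteq C_*(\descpxV{V})$, as desired.

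There is no real obstacle here — the statement is essentially the assembly of the two preceding lemmas. The only subtlety is the observation that critical simplices sit inside the descending complex, which is immediate from the definition of $\descpxV{V}$ as a union of down sets of critical intervals.
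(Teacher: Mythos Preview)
Your proof is correct and follows essentially the same approach as the paper, combining \cref{characterization-inv-chains} with \cref{flow-acts-wrap} and the observation that critical simplices lie in $\descpxV{V}$. Your argument is in fact slightly more streamlined: the paper expands $\Phi^{r}(\sigma)$ via the explicit formula of \cref{stabilized-critical} and then applies \cref{flow-acts-wrap} to each summand $\Phi^k(w)$, whereas you simply iterate $\Phi$ directly on $\sigma\in C_*(\descpxV{V})$, which bypasses that detour entirely.
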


\begin{proof}
Let $c\in C_*^\Phi(K)$ be any $\Phi$-invariant chain; we show that $c$ is contained in $C_*(\descpxV{V})$.
By \cref{characterization-inv-chains}, we can assume without loss of generality that $c$ is of the form $\Phi^\infty(\sigma)$ for a critical simplex $\sigma$.
By definition, $\Phi^\infty=\Phi^{r}$ for a large enough~$r$, and by \cref{stabilized-critical}, $\Phi^{r}(\sigma)$ is given by $\Phi^{r}(\sigma)=\sigma+w+\Phi(w)+\dots+\Phi^{r-1}(w)$, where $w=\Flow(\partial\sigma)$.
 It follows directly from the definition of the descending complex (\cref{def:desc-cpx}) that $\sigma, \partial\sigma$, and $w=\Flow(\partial \sigma)$ are contained in $C_*(\descpxV{V})$.
By \cref{flow-acts-wrap}, we have $\Phi^k(w)\in C_*(\descpxV{V})$ for every~$k$, and therefore $c=\Phi^{r}(\sigma)=\sigma+w+\Phi(w)+\dots+\Phi^{r-1}(w)\in C_*(\descpxV{V})$, proving the claim.
\end{proof}

\begin{figure}[t]
  \begin{minipage}[c]{0.5\linewidth}
 \centering
   \vcent{\includegraphics[width=.7\textwidth]{descending_refined_cycle.pdf}}
 \end{minipage}
 \hfill
 \begin{minipage}[c]{0.5\linewidth}
 \centering
   \vcent{\includegraphics[width=.7\textwidth]{descending_refined_cycle_flow.pdf}}
 \end{minipage}
 \caption{Discrete gradient (blue) with corresponding descending complex (green).
 Left: Cycle $c$ (red).
 Right: Stabilized cycle $\Phi(c)=\Phi^\infty(c)$ (red), supported on the descending complex (green).}
 \label{fig:cycleandflow}
 \end{figure}

The following, together with \cref{ex:cech-del}, directly implies \cref{theorem-a}.
\begin{theorem}
\label{inv-chain-supp-on-desc-cpx}
Let $f$ be a generalized discrete Morse function, let $r \in \mathbb R$,
and let $h \in~H_*(\sublevel{f}{r})$ be a homology class of the sublevel set $\sublevel{f}{r}$.
Then the lexicographically minimal cycle of $h$, with respect to the $f$-lexicographic order, is supported on the descending complex~$\descpxf{f}{r}$.
\end{theorem}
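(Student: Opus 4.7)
The plan is to work within the based chain complex $C_*(\sublevel{f}{r})$ equipped with the elementwise filtration induced by the $f$-lexicographic order restricted to $\sublevel{f}{r}$. Let $V$ denote the discrete gradient of $f$, let $V_r$ denote its restriction to $\sublevel{f}{r}$, and let $\widetilde{V_r}$ denote the minimal vertex refinement of $V_r$. Let $z$ be the lexicographically minimal cycle representing $h$. I will chain together the tools built up in \cref{algmorse-persistence,relating-gradients} to show $z \in C_*(\descpxV{V_r}) = C_*(\descpxf{f}{r})$.

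\textbf{Step 1: translate lex-minimality into flow invariance.} First I would apply \cref{alg:reduction} to the filtration boundary matrix of $C_*(\sublevel{f}{r})$, producing a reduction matrix $\VMatrix$ that is apparent pairs compatible (\cref{reduction-homogeneous}) and death-compatible (\cref{reduction-deathcomp}). By \cref{invpivotflow-equ-lexminsimplicial}, the cycle $z$ is then invariant under the associated reduction flow on the reduction basis $\Omega_*$.

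\textbf{Step 2: descend through a chain of gradient containments.} The reduction gradient contains the full apparent pairs gradient (\cref{apparent-pairs-contained-reduction-gradient}), so by \cref{comp-inv-chains-ref} the cycle $z$ is invariant under the apparent pairs flow acting on $\Omega_*$. By \cref{apparentpairs-flow-differentbasis}, this flow agrees with the apparent pairs flow on the simplicial basis $\chainbasis{*}$, so $z$ is apparent pairs flow invariant in the usual sense. The zero persistence apparent pairs form a subgradient of the apparent pairs gradient, so applying \cref{comp-inv-chains-ref} once more shows $z$ is invariant under the flow determined by the zero persistence apparent pairs gradient. Finally, \cref{apparent-equal-gradient} identifies this gradient with $\widetilde{V_r}$.

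\textbf{Step 3: invoke the Morse-theoretic support statement.} Having established that $z$ is $\widetilde{V_r}$-flow invariant, \cref{flowinf-supported-descending} (applied to the discrete gradient $\widetilde{V_r}$ on $\sublevel{f}{r}$) shows that $z$ is supported on $\descpxV{\widetilde{V_r}}$. Since $\widetilde{V_r}$ is a regular refinement of $V_r$, the two gradients share the same set of critical simplices, so \cref{refinement-implies-nested-descendingcpxV} gives the inclusion $\descpxV{\widetilde{V_r}} \subseteq \descpxV{V_r} = \descpxf{f}{r}$. Combining these inclusions yields that $z$ is supported on $\descpxf{f}{r}$, as desired.

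\textbf{Main obstacle.} The technical substance of the argument is already absorbed into the lemmas and propositions cited above; the proof itself is essentially a bookkeeping exercise threading the correct basis ($\chainbasis{*}$ versus $\Omega_*$) through each implication. The only subtlety is ensuring that the reduction matrix used in Step 1 is simultaneously apparent pairs compatible and death-compatible, so that both \cref{invpivotflow-equ-lexminsimplicial} and \cref{apparentpairs-flow-differentbasis} apply; this is why one must invoke \cref{alg:reduction} rather than any arbitrary reduction. The rest of the argument is a two-step descent $\text{reduction gradient} \supseteq \text{apparent pairs} \supseteq \text{zero persistence apparent pairs} = \widetilde{V_r}$, each step reinforcing flow invariance via \cref{comp-inv-chains-ref}, followed by an application of the Morse-theoretic support result.
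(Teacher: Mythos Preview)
Your proposal is correct and follows essentially the same approach as the paper's proof: translate lexicographic minimality into reduction-flow invariance via \cref{invpivotflow-equ-lexminsimplicial}, descend to the zero persistence apparent pairs gradient using \cref{apparent-pairs-contained-reduction-gradient,comp-inv-chains-ref,apparentpairs-flow-differentbasis}, and then conclude via \cref{flowinf-supported-descending,refinement-implies-nested-descendingcpxV}. The only cosmetic difference is that you route through the full apparent pairs gradient as an explicit intermediate before restricting to the zero persistence pairs, whereas the paper collapses this into a single step; both orderings are valid.
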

\begin{proof}
  Let $V$ be the discrete gradient of $f$, and let $W$ be the zero persistence apparent pairs gradient induced by the $f$-lexicographic order, which is a regular refinement of $V$ by \cref{apparent-equal-gradient}.
  Note that $f$ is compatible with $W$.

  Recall from \cref{reduction-homogeneous,reduction-deathcomp} that \cref{alg:reduction} computes a reduction $R=D\cdot \VMatrix$ of the filtration boundary matrix, corresponding to the simplexwise filtration of $\sublevel{f}{r}$ induced by the $f$-lexicographic order, such that the reduction matrix $\VMatrix$ is apparent pairs compatible and also death-compatible.
Consider the corresponding reduction gradient on the corresponding reduction basis $\Omega_{*}$ with associated reduction flow $\Psi\colon C_*(\sublevel{f}{r})\to C_*(\sublevel{f}{r})$.
 
 The lexicographically minimal cycle $z\in Z_*(\sublevel{f}{r})$ of $h$ is a $\Psi$-invariant cycle according to \cref{invpivotflow-equ-lexminsimplicial}.
 We show that $z$ is contained in $C_*(\descpxf{f}{r})$, which proves the claim:
As~$\VMatrix$ is apparent pairs compatible, it follows from \cref{apparent-pairs-contained-reduction-gradient,comp-inv-chains-ref} that $z$ is also invariant under the algebraic flow determined by the zero persistence apparent pairs gradient $W_r$ on $\Omega_{*}$.
\cref{apparentpairs-flow-differentbasis} implies that this flow coincides with the algebraic flow $\Phi\colon C_*(\sublevel{f}{r})\to C_*(\sublevel{f}{r})$ determined by the zero persistence apparent pairs gradient~$W_r$ on the standard basis given by the simplices of $\sublevel{f}{r}$.
It now follows from \cref{flowinf-supported-descending}, the definition of descending complex (\cref{def:desc-cpx}), and 
\cref{refinement-implies-nested-descendingcpxV}
that $z$ is contained in 
\[C_*^\Phi(\sublevel{f}{r})\subseteq C_*(\descpxV{W_r})=C_*(\descpx{W}{f}{r})\subseteq C_*(\descpx{V}{f}{r})=C_*(\descpxf{f}{r}).\qedhere \]
\end{proof}

Let $D$ be the filtration boundary matrix of the simplexwise filtration of $K$ induced by the $f$-lexicographic order, and let $R=D\cdot \VMatrix$ be a totally reduced reduction of $D$.
The following, together with \cref{ex:cech-del}, directly implies \cref{corollary-b}.
\begin{corollary}
  \label{fullyreduced-supported-descending}
  Let $f$ be a generalized discrete Morse function and 
  $(\sigma,\tau)$ a non-zero persistence pair of the simplexwise filtration induced by the $f$-lexicographic order.
  Let $r=f(\sigma)$ and $s=f(\tau)$ be the function values of $\sigma$ and of $\tau$, respectively.
  Then the lexicographically minimal cycle of $[\partial\tau]$ in the open sublevel set $\sublevelopen{f}{s}$, given as the column $R_\tau$ of the totally reduced filtration boundary matrix, is supported on the descending complex $\descpxf{f}{r}$.
  \end{corollary}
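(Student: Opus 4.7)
My plan is to deduce the corollary from \cref{inv-chain-supp-on-desc-cpx} applied at level $r=f(\sigma)$ to the class of $R_\tau$ in $H_*(\sublevel{f}{r})$, after identifying $R_\tau$ as the lexicographically minimal representative of $[\partial\tau]$ in $\sublevelopen{f}{s}$. First, since $R_\tau$ is reduced with pivot row $\op{ind}(\sigma)$, every nonzero entry of $R_\tau$ corresponds to a basis element of index at most $\op{ind}(\sigma)$ in the $f$-lexicographic order, and hence of $f$-value at most $r$; thus $R_\tau$ is a cycle in $\sublevel{f}{r}\subseteq\sublevelopen{f}{s}$.

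Second, I would verify the class identification $[R_\tau]=[\partial\tau]\in H_*(\sublevelopen{f}{s})$. By the standard persistence interpretation of the reduction $R=D\cdot\VMatrix$, the elementary filtered piece attached to the pair $(\sigma,\tau)$ has $R_\tau$ as the cycle representative of a class that is created at index $\op{ind}(\sigma)$ and killed at index $\op{ind}(\tau)$. Since $\tau$ manifestly kills the class of $\partial\tau$ and its addition reduces the rank of homology by exactly one, uniqueness of the killed class forces $[R_\tau]=[\partial\tau]$ in $H_*(\sublevelopen{f}{s})$.

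Third, I would invoke \cref{lexmin-iff-nobirthspx} to show $R_\tau$ is lexicographically minimal in $\sublevelopen{f}{s}$, and hence, by the inclusion of chain groups, also in $\sublevel{f}{r}$. The pivot $\sigma$ is a birth of $K$'s reduction but, having its persistence partner $\tau$ outside $\sublevelopen{f}{s}$, becomes essential in the restricted filtration. Every other basis element $\sigma_i\in\op{supp}(R_\tau)$ sits at a row that, by the total reduction property, is not the pivot of any column $R_k$ with $k<\op{ind}(\tau)$; if instead the row were the pivot of some $R_j$ with $j>\op{ind}(\tau)$, the partner $\sigma_j$ would satisfy $f(\sigma_j)\geq s$, again lying outside $\sublevelopen{f}{s}$ and rendering $\sigma_i$ essential in the restriction. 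Thus $\op{supp}(R_\tau)$ consists solely of death and essential basis elements of the restricted filtration.

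Finally, applying \cref{inv-chain-supp-on-desc-cpx} to the class of $R_\tau$ in $H_*(\sublevel{f}{r})$ yields $R_\tau\in C_*(\descpxf{f}{r})$, which is the desired conclusion. The main obstacle lies in the class identification of the second step: since $\VMatrix_\tau$ may accumulate death simplices at the exact level $s$ other than $\tau$, the chain $\VMatrix_\tau-\tau$ need not lie in $C_{*+1}(\sublevelopen{f}{s})$, so $[R_\tau]=[\partial\tau]$ cannot be read off directly from $R_\tau=\partial\tau+\partial(\VMatrix_\tau-\tau)$ and must instead be derived from the uniqueness of the class killed by $\tau$ under the persistence decomposition.
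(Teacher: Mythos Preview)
Your overall strategy mirrors the paper's proof: show $R_\tau$ is a cycle in $\sublevel{f}{r}\subseteq\sublevelopen{f}{s}$, verify via \cref{lexmin-iff-nobirthspx} that it is lexicographically minimal there, and apply \cref{inv-chain-supp-on-desc-cpx}. Steps 1, 3, and 4 are correct and match the paper's argument closely. The genuine gap is precisely the one you single out in step~2, and your proposed resolution does not close it: ``uniqueness of the class killed by $\tau$'' only yields $[R_\tau]=[\partial\tau]$ in the homology of the simplexwise filtration stage just before $\tau$, not in $H_*(\sublevelopen{f}{s})$, and these can differ when other death simplices at level $s$ precede $\tau$. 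Concretely, take two critical triangles $abc<bcd$ at a common level $s$, with the shared edge $bc$ at a level strictly between the other edges and $s$ (all simplices critical). Exhaustive reduction gives $R_{bcd}=\partial(bcd)-\partial(abc)$, so $[R_{bcd}]=[\partial(bcd)]-[\partial(abc)]\neq[\partial(bcd)]$ in $H_1(\sublevelopen{f}{s})$, where both boundary cycles are nonzero and independent since $\sublevelopen{f}{s}$ contains no $2$-simplices.

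The paper's own proof makes the same assertion (``As $R$ is a reduction of $D$, this implies that $R_\tau$ and $\partial\tau$ are homologous cycles in $\sublevelopen{f}{s}$'') without addressing this subtlety, so the issue is not specific to your attempt. Note, however, that the substantive conclusion survives: your steps 1, 3, 4 already establish that $R_\tau$ is the lexicographically minimal representative of \emph{its own} class in $\sublevel{f}{r}$, and \cref{inv-chain-supp-on-desc-cpx} then yields $R_\tau\in C_*(\descpxf{f}{r})$. Only the identification of $R_\tau$ as the minimal representative of $[\partial\tau]$ in $\sublevelopen{f}{s}$ is in doubt, and your correctly diagnosed obstacle is the reason.
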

\begin{proof}
  As $(\sigma,\tau)$ is a non-zero persistence pair, we know that $f(\sigma)<f(\tau)=s$ and that $\tau$ is a critical simplex. As $R$ is a reduction of $D$, this implies that $R_\tau$ and $\partial \tau$ are homologous cycles in $\sublevelopen{f}{s}$.
  Since $R$ is totally reduced, the cycle $R_\tau$ does not contain a (non-essential) birth simplex of the (smaller) simplexwise filtration of $\sublevelopen{f}{s}$ induced by the $f$-lexicographic order.
  Thus, \cref{lexmin-iff-nobirthspx} implies that $R_\tau$ is the lexicographically minimal cycle of $[\partial \tau]$ in~$\sublevelopen{f}{s}$.
  As $r=f(\sigma)$ and $R$ is a reduction of $D$, the cycle $R_\tau$ is supported on the subcomplex $\sublevel{f}{r}$ of $\sublevelopen{f}{s}$, implying that it is also a lexicographically minimal cycle in $\sublevel{f}{r}$. It now follows from \cref{inv-chain-supp-on-desc-cpx} that the cycle $R_\tau$ is supported on the descending complex $\descpxf{f}{r}$.
\end{proof}

\bibliography{literature.bib}

\newpage
\appendix

\section{Algebraic Morse theory and persistence}

\subsection{Gradient pairs from persistence pairs}
\label{appendix:gradientpairs-persistentpairs}

In this section, we discuss how the direct sum decomposition of filtered chain complexes yields a straightforward interpretation of persistence pairs as an algebraic gradient, which was mentioned in \cref{gradientpairs-persistentpairs}.

Consider the direct sum decomposition explained in \cref{prelim-pers-hom} and equip the chain complex~$C_*$ with the new ordered basis $E_* = \eta_1<\dots < \eta_l$ given by
\[
\eta_i=
\begin{dcases*}
R_j & if there exists an index persistence pair $(i,j)$,\\
S_i & if $i$ is a death or essential index.
\end{dcases*}
\]
We call $E_*$ the \emph{decomposition basis}, noting that it induces a decomposition of the filtered chain complex as described in \cref{prelim-pers-hom}.
Note that with respect to the original basis we have $\chainpivot{\chainbasis{*}}{\eta_i}=\sigma_i$ for all $i$.
By pairing the death columns $\VMatrix_j$ with their boundaries $R_j=D\cdot \VMatrix_j$, we obtain a set of disjoint pairs that we call the \emph{decomposition gradient} of $S$:
\[
\{(R_j,\VMatrix_j)\mid\ j\text{ is a death index}\}.
\]

\begin{proposition}
 The decomposition gradient is an algebraic gradient on the basis $E_*$.
\end{proposition}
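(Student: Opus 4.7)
The plan is to mirror the proof of \cref{pivot-gradient-is-gradient}: I would explicitly construct a monotonic function $f\colon E_* \to \mathbb{N}$ whose algebraic gradient coincides with the decomposition gradient. The natural candidate is $f(\eta_i) = i$ whenever $i$ is a birth or essential index, and $f(\eta_j) = \pivot{R_j}$ whenever $j$ is a death index. For each index persistence pair $(i,j)$ this immediately yields $f(\eta_i) = i = f(\eta_j)$, so the pair $(R_j, \VMatrix_j) = (\eta_i, \eta_j)$ of the decomposition gradient shares the same $f$-value.

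Before checking the gradient conditions, I would verify that the decomposition basis $E_*$ is compatible with the elementwise filtration, in the sense that $\downarrow \eta_k$ induces the same subcomplex of $C_*$ as $\downarrow \sigma_k$ for every $k$. For $k$ death or essential, $\eta_k = \VMatrix_k \in \langle \sigma_1, \dots, \sigma_k \rangle$ with $\VMatrix_{k,k} \neq 0$, since $\VMatrix$ is a full rank upper triangular matrix. For $k$ a birth index paired with some death $j$, the basis element $\eta_k = R_j = D \cdot \VMatrix_j$ lies in $\langle \sigma_1, \dots, \sigma_k\rangle$ with a non-zero entry at $\sigma_k$, because $\pivot{R_j} = k$. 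Hence $\{\eta_1, \dots, \eta_k\}$ and $\{\sigma_1, \dots, \sigma_k\}$ span the same subspace for each $k$, and in particular whenever $\eta_i$ is a facet of $\eta_j$ in the new basis, we have $i < j$.

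The key step is to identify all facet pairs in $E_*$ by computing $\partial \eta_j$ case by case. If $j$ is a death index, then $\partial \eta_j = D \cdot \VMatrix_j = R_j = \eta_i$, where $i$ is the paired birth index, so $\eta_j$ has a unique facet $\eta_i$ in $E_*$. If $j$ is a birth index, then $\eta_j = R_k$ is already a boundary and hence a cycle, so $\partial \eta_j = 0$. If $j$ is an essential index, then $\partial \eta_j = D \cdot \VMatrix_j = R_j = 0$ by essentiality of $j$. Therefore the facet pairs in $E_*$ are exactly the pairs of the decomposition gradient, which are disjoint by the bijective correspondence between birth and death indices.

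To conclude, monotonicity of $f$ is immediate since every facet pair satisfies $f(\mu) = f(\eta)$, and the equality condition of \cref{def:alg-morse-fct} holds trivially because every facet pair both lies in the decomposition gradient and satisfies $f(\mu) = f(\eta)$. The only subtle point --- not really an obstacle --- is keeping track that in the $E_*$ basis, $\partial \eta_j$ for a death index $j$ is a single basis element $\eta_i$ rather than a linear combination of original simplices, a direct consequence of the definition $\eta_i := R_j$ together with the observation that $\pivot{R_j} = i$.
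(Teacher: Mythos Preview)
Your proof is correct and takes essentially the same approach as the paper: both compute $\partial\eta_j$ case by case to observe that the only facet pairs in $E_*$ are precisely the decomposition gradient pairs $(R_j,\VMatrix_j)$, after which any function assigning equal values to these pairs is trivially an algebraic Morse function. The paper chooses $f(R_j)=f(\VMatrix_j)=j$ rather than your $f(R_j)=f(\VMatrix_j)=\pivot{R_j}$, and omits the filtration-compatibility check (which, unlike in \cref{pivot-gradient-is-gradient}, is redundant here since the facet structure is already trivial), but these are inconsequential differences.
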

\begin{proof}
Consider the function $f\colon E_* \to \mathbb{N}$ with values $f(R_j)=f(\VMatrix_j)=j$ for every death index $j$ and $f(\VMatrix_i)=i$ for every essential index $i$.
Note first that for any death index $j$, the basis element $R_j$ is the only facet of $\VMatrix_j$, and $R_j$ has no facets as it is a cycle.
Similarly, for any essential index $i$, the basis element $\VMatrix_i$ has no facets, as well.
Thus, it is immediate that~$f$ is a monotonic function whose algebraic gradient is the decomposition gradient.
\end{proof}

\subsection{The flow of an algebraic gradient}
\label{appendix-alg-flow}
In this section, we provide the relegated proofs for the statements in \cref{section-alg-flow}.

\begin{proof}[Proof of \cref{comp-inv-chains-ref}.]
   Let $c\in C_n$ be a $\Phi$-invariant chain.
    Recall from \cref{def-alg-flow} that $\Phi$ is given by
   $\Phi(c)=c+\partial \Flow(c)+\Flow(\partial c)$,
  where $\Flow\colon C_*\to C_{*+1}$ is the linear map with $\Flow(\sigma)=-\IP{\partial \tau}{\sigma}^{-1} \cdot \tau$ if $(\sigma,\tau)\in V$ and $0$ on all other basis elements of $\chainbasis{*}$.
  By \cref{characterization-inv-chains} and linearity, we can assume without loss of generality that $c$ is of the form $\Phi^\infty(\eta)$ for a $V$-critical basis element~$\eta\in \chainbasis{n}$.
   By definition, $\Phi^\infty=\Phi^{r+1}$ for a large enough $r\in \mathbb{N}$, and by \cref{stabilized-critical}, the chain $c=\Phi^{r+1}(\eta)$ satisfies $\Flow(c)=\Flow(\Phi^{r+1}(\eta))=0.$ Hence, we have
  \[c=\Phi(c)=c+\Flow(\partial c)+\partial \Flow(c)=c+\Flow(\partial c)\]
  from which we conclude that $\Flow(\partial c)=0$ holds.
  By construction of $\Flow$, this implies that $c$ and~$\partial c$ are the sums of critical elements and gradient cofacets of $V$.
  
  Similarly to before, $\Psi$ is given~by $\Psi(c)=c+\partial {E}(c)+ {E}(\partial c)$,
  where ${E}\colon C_*\to C_{*+1}$ is the linear map with $ {E}(\sigma)=-\IP{\partial \tau}{\sigma}^{-1} \cdot \tau$ if $(\sigma,\tau)\in W$ and $0$ on all other basis elements of~$\chainbasis{*}$.
  As $W\subseteq V$, it holds true that $c$ and $\partial c$ are the sums of critical elements and gradient cofacets of $W$, as well.
By construction of $E$, this implies $E(c)=E(\partial c)=0$, and hence
  \[\Psi(c)=c+\partial {E}(c)+ {E}(\partial c)=c+ 0+ 0=c,\]
  meaning that $c$ is $\Psi$-invariant, proving the claim.
  \end{proof}

  \goodbreak

  The following statement is a direct consequence of the definitions.
  \begin{lemma}
    \label{sorting-implies-nreduced}
    Let $(C_*,\chainbasis{*})$ be a based chain complex and let $\sigma_1<\dots< \sigma_l$ be any total order on the basis $\chainbasis{*}$ corresponding to an elementwise filtration that refines the sublevel set filtration induced by an algebraic Morse function $f$.
  Then, with respect to this total order, for any gradient pair $(a,b)\in V$ of $f$ the pivot of $\partial b$ is equal to $a$, $\chainpivot{\chainbasis{*}}{\partial b}=a$.
   \end{lemma}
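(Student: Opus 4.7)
The plan is to unpack the definitions and show that every element of $\supp{\chainbasis{*}}{\partial b}$ other than $a$ strictly precedes $a$ in the given total order. First, since $(a,b)\in V$ is in particular a facet pair, by the definition of facet pair we have $\IP{\partial b}{a}\neq 0$, so $a\in \supp{\chainbasis{*}}{\partial b}$. Thus the statement reduces to verifying that for every other $c\in \supp{\chainbasis{*}}{\partial b}$ we have $c<a$ in the order.

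Next, I would use the two defining properties of an algebraic Morse function from \cref{def:alg-morse-fct}. Fix $c\in \supp{\chainbasis{*}}{\partial b}$ with $c\neq a$; then $(c,b)$ is again a facet pair. Because $V$ is a \emph{disjoint} collection of facet pairs, $b$ is the cofacet of at most one pair in $V$, and that pair is $(a,b)$. Hence $(c,b)\notin V$, and the second bullet of \cref{def:alg-morse-fct} forces $f(c)\neq f(b)$. Combining this with monotonicity (the first bullet), which gives $f(c)\leq f(b)$, we conclude $f(c)<f(b)=f(a)$.

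Finally, the hypothesis that the total order $\sigma_1<\dots<\sigma_l$ corresponds to an elementwise filtration refining the sublevel set filtration of $f$ means precisely that $f(c)<f(a)$ implies $c<a$ in the order on $\chainbasis{*}$. Consequently $a$ is the maximal element of $\supp{\chainbasis{*}}{\partial b}$, so $\chainpivot{\chainbasis{*}}{\partial b}=a$.

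I do not anticipate any real obstacle; this matches the paper's framing of the lemma as a direct consequence of the definitions. The only subtle point is tracking that the word \emph{disjoint} in \cref{def:alg-morse-fct} is what rules out a second pair $(c,b)\in V$ competing with $(a,b)$; once that is made explicit, the monotonicity condition together with the equivalence $f(\mu)=f(\eta)\iff (\mu,\eta)\in V$ yields the strict inequality $f(c)<f(a)$, and the ordering hypothesis does the rest.
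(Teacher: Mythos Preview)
Your proof is correct and is precisely the unpacking of definitions that the paper has in mind when it states that the lemma ``is a direct consequence of the definitions.'' The key observations---that $a\in\supp{\chainbasis{*}}{\partial b}$, that disjointness of $V$ forces $(c,b)\notin V$ for $c\neq a$, and hence $f(c)<f(b)=f(a)$ by monotonicity and the biconditional in \cref{def:alg-morse-fct}, so that $c<a$ by the refinement hypothesis---are exactly the intended argument.
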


   Let $(C_*,\chainbasis{*})$ be a based chain complex and $V$ an algebraic gradient on $\chainbasis{*}$.
Let $\spacegradientfacets{n}$ be the linear subspace of $C_n$ spanned by the gradient facets of $V$ and consider the canonical linear projection $\projfacets{n}\colon \spann{\gradbound{n}{V}}\to \spacegradientfacets{n}$ with respect to the basis $\chainbasis{n}$.
  \begin{lemma}
    \label{gradbound-independent}
    The subset $\gradbound{n}{V}=\{\partial b\mid \exists\ (a,b)\in V \text{ with }a\in \chainbasis{n}\}\subseteq C_n$ is linearly independent, and the canonical linear projection 
    $\projfacets{n}\colon \spann{\gradbound{n}{V}}\to \spacegradientfacets{n}$
    is an isomorphism.
  \end{lemma}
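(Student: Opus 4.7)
The plan is to exploit the fact that each boundary $\partial b$, for $(a,b) \in V$ with $a \in \chainbasis{n}$, has $a$ as its pivot element with respect to an appropriately chosen ordering of $\chainbasis{n}$, and that distinct pairs have distinct gradient facets (since the pairs in $V$ are disjoint). This gives the family $\{\partial b_j\}$ and its projection to $\spacegradientfacets{n}$ a triangular structure from which both linear independence and bijectivity fall out immediately.

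Concretely, since $V$ is an algebraic gradient, it is induced by some algebraic Morse function $f$. Refining the sublevel set filtration of $f$ to any elementwise filtration, \cref{sorting-implies-nreduced} yields a total order on $\chainbasis{n}$ such that for every pair $(a,b) \in V$ with $a \in \chainbasis{n}$ one has $\chainpivot{\chainbasis{*}}{\partial b} = a$. Enumerate the pairs with gradient facet in degree $n$ as $(a_1,b_1), \dots, (a_k,b_k)$ with $a_1 < a_2 < \dots < a_k$. Since $a_j$ is a facet of $b_j$, the coefficient of $a_j$ in $\partial b_j$ is nonzero, while all other basis elements in the support of $\partial b_j$ are strictly smaller than $a_j$ by the pivot property. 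The projection $\projfacets{n}$ simply reads off the coefficients of the gradient facets, so
\[
\projfacets{n}(\partial b_j) = c_j \cdot a_j + \sum_{i<j} c_{ij}\cdot a_i
\]
with $c_j \neq 0$. Hence the matrix representing $\projfacets{n}(\partial b_1), \dots, \projfacets{n}(\partial b_k)$ in the basis $a_1, \dots, a_k$ of $\spacegradientfacets{n}$ is upper triangular with nonzero diagonal entries.

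From this triangular structure, the family $\{\projfacets{n}(\partial b_j)\}$ is linearly independent in $\spacegradientfacets{n}$, which forces linear independence of $\gradbound{n}{V} = \{\partial b_1, \dots, \partial b_k\}$ in $C_n$: any nontrivial relation among the $\partial b_j$ would project to a nontrivial relation among the $\projfacets{n}(\partial b_j)$. The same computation shows that $\projfacets{n}$ is injective on $\spann{\gradbound{n}{V}}$. Disjointness of the pairs in $V$ gives $\dim\spacegradientfacets{n} = k = \dim\spann{\gradbound{n}{V}}$, so injectivity upgrades to an isomorphism by a dimension count.

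The main technical point is securing the pivot identity via \cref{sorting-implies-nreduced}, which requires selecting a total order on $\chainbasis{n}$ refining the sublevel set filtration of $f$; one has to observe that such an order always exists and that the conclusion of the lemma is independent of this auxiliary choice. Once the pivot property is in hand, both claims reduce to the familiar observation that an upper triangular matrix with nonzero diagonal is invertible.
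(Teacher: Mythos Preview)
Your proof is correct and follows essentially the same approach as the paper: both invoke \cref{sorting-implies-nreduced} to obtain a total order in which each $\partial b$ has pivot $a$, use disjointness of the pairs to get distinct pivots (equivalently, your upper-triangular structure), and finish the isomorphism claim by a dimension count. The only cosmetic difference is that the paper argues linear independence of the $\partial b_j$ directly in $C_n$ from the distinct pivots and then treats the projection, whereas you first establish the triangular form of the projections in $\spacegradientfacets{n}$ and pull linear independence back; this is a reordering of the same idea, not a different method.
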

  \begin{proof}
  By \cref{sorting-implies-nreduced}, we can take a total order $\sigma_1<\dots<\sigma_l$ on the basis $\chainbasis{*}$ such that for any pair $(a,b)\in V$ the pivot of $\partial b$ is equal to $a$, $\chainpivot{\chainbasis{*}}{\partial b}=a$.
  As the pairs in $V$ are disjoint, we know that the pivots $\chainpivot{\chainbasis{*}}{\partial b}=a$ for $(a,b)\in V$ are distinct.
In particular, this implies that the subset $\gradbound{n}{V}$ is linearly independent.
  
  To prove the second claim, note that, similarly to before, the pivots $\chainpivot{\chainbasis{*}}{\projfacets{n}(\partial b)}=a$ for $(a,b)\in V$ with $a\in\chainbasis{n}$ are distinct.
This implies that the linearly independent vectors $\gradbound{n}{V}$ are sent to linearly independent vectors in $\spacegradientfacets{n}$.
As $\spann{\gradbound{n}{V}}$ and~$\spacegradientfacets{n}$ have the same dimension $\card{(\{(a,b)\in V \mid a\in \chainbasis{n}\})}$, this shows that $\projfacets{n}$ is an~isomorphism.
  \end{proof}

  \begin{proof}[Proof of \cref{phiinv-equ-nogradientfacets}.]
    We start by proving the first claim.
    On the one hand, we know from \cref{characterization-inv-chains} and \cref{stabilized-critical}, that a $\Phi$-invariant chain contains no gradient facets of $V$.
    On the other hand, if a cycle contains no gradient facets of $V$, then it is $\Phi^{(a,b)}$-invariant for every pair~$(a,b)\in V$, and it follows from \cref{inv-iff-vecflow-inv} that it is also $\Phi$-invariant.
    
    To prove the second claim, let $z'\in Z_n$ be any $\Phi$-invariant cycle that is $V$-homologous to~$z$.
    Then by definition, there exists a $\partial e\in \spann{\gradbound{n}{V}}$ with $z-z'=\partial e$.
    Recall from \cref{gradbound-independent}, that the projection $\projfacets{n}\colon \spann{\gradbound{n}{V}}\to \spacegradientfacets{n}$ 
    onto the subspace of $C_n$ spanned by the gradient facets of $V$ is an~isomorphism.
    By assumption, the difference $z-z'=\partial e$ is $\Phi$-invariant, and therefore it contains no gradient facets of $V$, by our first claim.
    Thus, we have $\projfacets{n}(\partial e)=0$, implying $z-z'=\partial e=0$, and proving the second claim $z=z'$.
  \end{proof}

  Let $(C_*,\chainbasis{*}=\sigma_1<\dots<\sigma_l)$ be a based chain complex with an elementwise filtration.
Motivated by \cref{sorting-implies-nreduced}, we make the following definition.

  \begin{definition}
   We call an algebraic gradient $V$ on $\chainbasis{*}$ \emph{reduced} if for all pairs $(a,b)\in V$ we have $\chainpivot{\chainbasis{*}}{\partial b}=a$, and we call it \emph{reduced in degree $n$}, or \emph{$n$-reduced}, if this holds for all pairs $(a,b)\in V$ with~$a\in \chainbasis{n}$.
  \end{definition} %

  Motivated by the notion of two cycles being $V$-homologous, and by \cref{phiinv-equ-nogradientfacets}, we make the following definition.
\begin{definition}
  \label{reductiongradient-complete}
  We say that an algebraic gradient $V$ on $\chainbasis{*}$
  \emph{generates the $n$-boundaries}~if the set
  of gradient cofacet boundaries 
  $\gradbound{n}{V}$ generates the entire subspace of boundaries $B_n\subseteq C_n$.
 \end{definition}

 \begin{remark}
   \label{reduction-reduced-complete}
 By construction, any reduction gradient is $n$-reduced and generates the $n$-boundaries for all $n$.
 \end{remark}

\subsection{Relating the algebraic flow and lexicographically minimal cycles}
\label{appendix-flow-equ-lex}
In this section, we prove \cref{invpivotflow-equ-lexminsimplicial,unique-minimizers} through a sequence of general statements.
As we deal with two different bases in \cref{invpivotflow-equ-lexminsimplicial}, we first introduce the \emph{$n$-reduction gradient}, which provides an intermediate between lexicographically minimal cycles with respect to $\chainbasis{*}$ and the reduction gradient defined on the reduction basis $\Omega_*$.

Let $(C_*,\chainbasis{*}=\sigma_1<\dots<\sigma_l)$ be a based chain complex with an elementwise filtration, and let $R=D\cdot \VMatrix$ be a reduction of the filtration boundary matrix. %
We equip the chain complex $C_*$ with the new ordered basis $\Pi_*=\tau_1<\cdots<\tau_l$ given by
\[
\tau_i=
\begin{dcases*}
S_i & if $i$ is a death index and $\sigma_i\in C_{n+1}$,\\
\sigma_i & otherwise.
\end{dcases*}
\]
We call $\Pi_*$ the \emph{$n$-reduction basis}.
Note that $\Sigma_*$ and $\Pi_*$ induce the same lexicographic preorder on $C_n$.
In particular, note that for every death index $j$ with $\sigma_j\in C_{n+1}$ we have $\chainpivot{\Pi_*}{R_j}=\chainpivot{\chainbasis{*}}{R_j}$, where $R_j=D\cdot \VMatrix_j$.
Similarly to \cref{pivot-gradient-is-gradient}, the collection of disjoint pairs
\[\{(\chainpivot{\Pi_*}{R_j},\VMatrix_j)\mid\ j\text{ is a death index and }\sigma_j\in C_{n+1}\}\]
is an algebraic gradient on $\Pi_*$, the \emph{$n$-reduction gradient} of $S$.
  By construction, the $n$-reduction gradient is $n$-reduced and generates the $n$-boundaries.

We now prove some general statements about algebraic gradients that are $n$-reduced and generate the $n$-boundaries.
Let $(C_*,\Pi_*=\tau_1<\dots<\tau_l)$ be any based chain complex with an elementwise filtration.
\begin{lemma}
\label{irred-nogradientfacets}
If $V$ is an $n$-reduced algebraic gradient on $\Pi_*$ that generates the $n$-boundaries, then a cycle $z\in Z_n$ is reducible if and only if its support in the basis $\Pi_*$ contains a gradient facet of $V$.
\end{lemma}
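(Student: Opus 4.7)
The plan is to prove both directions using two facts about the $n$-reduction gradient: first, that $V$ being $n$-reduced means $\chainpivot{\Pi_*}{\partial b}=a$ for every pair $(a,b)\in V$; second, that distinct pairs of $V$ have distinct facets, so any linear combination $\sum_i\lambda_i\partial b_i$ (with $\lambda_i\neq 0$ and $(a_i,b_i)\in V$) has pivot equal to the maximal $a_i$.

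For the easy direction (gradient facet in $\supp z$ implies $z$ is reducible), I would take any pair $(a,b)\in V$ with $a\in\supp_{\Pi_*}z$ and set $\lambda=-\IP{z}{a}/\IP{\partial b}{a}$, then consider $z'=z+\lambda\partial b$. Because $\chainpivot{\Pi_*}{\partial b}=a$, the support of $\lambda\partial b$ lies entirely in $\{\tau\leq a\}$, so the symmetric difference $\supp_{\Pi_*}z\,\triangle\,\supp_{\Pi_*}z'$ is contained in $\{\tau\leq a\}$, and by the choice of $\lambda$ the element $a$ lies in $\supp_{\Pi_*}z\setminus\supp_{\Pi_*}z'$. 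The maximum of the symmetric difference is therefore $a$, and it lies in $\supp_{\Pi_*}z$, giving $z'\sqsubset z$.

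For the reverse direction, assume $z$ is reducible, so there exists $e\in C_{n+1}$ with $z'=z+\partial e\sqsubset z$. Using that $V$ generates the $n$-boundaries, I would write $\partial e=\sum_i\lambda_i\partial b_i$ with $(a_i,b_i)\in V$, $a_i\in\Pi_n$, and $\lambda_i\neq 0$. Since $V$ is $n$-reduced and the $a_i$ are distinct, the pivot of $\partial e$ is $M=\max_i a_i$, which is a gradient facet. I would then show $M\in\supp_{\Pi_*}z$ by contradiction: if $M\notin\supp_{\Pi_*}z$, then the coefficient of $M$ in $z'$ equals that in $\partial e$, which is nonzero, placing $M$ in $\supp_{\Pi_*}z'$. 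As the symmetric difference of the supports lies in $\supp_{\Pi_*}\partial e\subseteq\{\tau\leq M\}$, its maximum would be $M$, which lies in $\supp_{\Pi_*}z'$, forcing $z\sqsubset z'$ and contradicting $z'\sqsubset z$. Hence $M$ is a gradient facet in $\supp_{\Pi_*}z$.

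The only mildly delicate point is the contradiction step in the reverse direction, where one must correctly track that every element of the symmetric difference $\supp_{\Pi_*}z\,\triangle\,\supp_{\Pi_*}z'$ sits inside $\supp_{\Pi_*}\partial e$ and is therefore bounded above by $M$; once this is in hand, the argument reduces to comparing where $M$ lands under the lexicographic order and is essentially mechanical.
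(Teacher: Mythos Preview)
Your proposal is correct and follows essentially the same approach as the paper's proof: both directions hinge on the fact that the pivot of a nonzero combination $\sum_i\lambda_i\partial b_i$ is the maximal gradient facet $a_j$ appearing, and the contradiction you spell out in the reducible-implies-gradient-facet direction is exactly what the paper compresses into the one-line remark ``otherwise $z\sqsubseteq z'$''. The only cosmetic difference is that the paper phrases the other direction via the single-pair flow $\Phi^{(a,b)}(z)$, which amounts to your chain $z+\lambda\,\partial b$ with $\lambda=-\IP{z}{a}/\IP{\partial b}{a}$.
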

\begin{proof}
 Assume that the cycle $z$ is reducible.
By definition, there exists a cycle $z'\sqsubset z$ and a chain $e\in C_{n+1}$ such that $z-z'=\partial e$.
As $V$ generates the $n$-boundaries, the boundary $\partial e \in B_n=\spann{\gradbound{n}{V}}$ can be written as a linear combination $\partial e=\sum \lambda_i \partial b_i$ for some $(a_i,b_i)\in V$ with~$\lambda_i\neq 0$ and $a_i\in\Pi_{n}$.
Since the chains $\partial b_i$ have distinct pivots, as $V$ is $n$-reduced and the pairs in $V$ are disjoint, there exists a unique chain $\partial b_j$ with $\chainpivot{\Pi_*}{\partial e}=\chainpivot{\Pi_*}{\partial b_j}=a_j$.
As $\lambda_j\neq 0$, the gradient facet $a_j$ must be contained in $z$, as otherwise $z\sqsubseteq z'$, contradicting the assumption.

 For the converse, note that if the cycle $z$ contains some gradient facet $a$ with $(a,b)\in V$, then $z'=\Phi^{(a,b)}(z)=z+\IP{z}{a}\cdot \partial \Flow(a)$, where $\Flow(a)=-\IP{\partial b}{a}^{-1} \cdot b$, is a homologous cycle with~$z'\sqsubset z$, since $\chainpivot{\Pi_*}{\partial b}=a$ and $a$ is contained in $\supp{{\Pi_*}}{z}$ but not in~$\supp{\Pi_*}{z'}$.
\end{proof}

\begin{proposition}
\label{flowinv-equ-lexmin}%
If $V$ is an $n$-reduced algebraic gradient on a basis $\Pi_*$ that generates the $n$-boundaries, then for a cycle $z\in Z_n$ the following are equivalent:
\begin{itemize}
 \item $z$ is lexicographically minimal with respect to the basis $\Pi_*$;
 \item $z$ is invariant under the algebraic flow $\Phi$ determined by $V$, i.e., it satisfies $\Phi(z)=z$.
\end{itemize}
\end{proposition}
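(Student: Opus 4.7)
The plan is to prove the equivalence by composing two characterizations already established in the paper, essentially reducing the statement to a two-step chain of biconditionals. First I would unfold \cref{def:lexoptimal}: saying that $z$ is lexicographically minimal is exactly the negation of $z$ being reducible, i.e., there is no homologous cycle $z + \partial e \sqsubset z$. Under the standing hypotheses on $V$ — namely that $V$ is $n$-reduced and generates the $n$-boundaries — \cref{irred-nogradientfacets} applies verbatim and translates reducibility of $z$ into the concrete combinatorial condition that $\supp{\Pi_*}{z}$ contains a gradient facet of $V$. Consequently, $z$ is lexicographically minimal if and only if its support in the basis $\Pi_*$ contains no gradient facet of $V$.

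Second, I would invoke \cref{phiinv-equ-nogradientfacets}, which for an arbitrary algebraic gradient states that a cycle $z \in Z_n$ is $\Phi$-invariant if and only if it contains no gradient facet of $V$. Chaining this with the previous biconditional yields the claim: $z$ is lexicographically minimal with respect to $\Pi_*$ iff $\supp{\Pi_*}{z}$ has no gradient facet of $V$ iff $\Phi(z)=z$.

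I do not expect a substantial obstacle, since both ingredients have already been proved in the preceding subsections; the hypotheses on $V$ ($n$-reducedness and generation of $n$-boundaries) are precisely those required by \cref{irred-nogradientfacets}, while \cref{phiinv-equ-nogradientfacets} needs no additional assumption. The only point requiring mild care is the bookkeeping of bases: "contains a gradient facet" must mean the same thing in the two invocations. This is automatic, because the notion of gradient facet depends only on the pairing $V$, and $V$ is specified to live on $\Pi_*$ throughout. Thus the two characterizations compose cleanly, and the proof is essentially a one-line chain of equivalences once the earlier lemmas are in hand.
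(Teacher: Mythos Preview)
Your proposal is correct and mirrors the paper's own proof: the paper also chains \cref{phiinv-equ-nogradientfacets} and \cref{irred-nogradientfacets} (in the reverse order) to obtain the equivalence, noting that irreducible means lexicographically minimal.
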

\begin{proof}
Let $z\in Z_n$ be a $\Phi$-invariant cycle.
By \cref{phiinv-equ-nogradientfacets}, this is equivalent to $z$ not containing a gradient facet, and that, in turn is equivalent by \cref{irred-nogradientfacets} to $z$ being irreducible, which is the same as being lexicographically minimal.
\end{proof}

In particular, note that \cref{flowinv-equ-lexmin} implies that every $n$-reduced algebraic gradient on $\Pi_{*}$ that generates the $n$-boundaries has the same set of invariant cycles.

\begin{proof}[{Proof of \cref{invpivotflow-equ-lexminsimplicial}}]
  By construction of the algebraic flow and under the assumption that $\VMatrix$ is death-compatible, the reduction flow agrees on cycles with the flow determined by the $n$-reduction gradient on the $n$-reduction basis $\Pi_*$.
  Thus, together with \cref{flowinv-equ-lexmin}, this implies that the cycle $z$ is invariant under the reduction flow if and only if $z$ is lexicographically minimal with respect to the ordered basis $\Pi_*$.
  As mentioned before, the ordered bases $\Pi_*$ and $\chainbasis{*}$ induce the same lexicographic preorder on $C_n$, and hence the claim follows.
\end{proof}

Next, we show the uniqueness of optimal homologous cycles:
\begin{proposition}
  \label{unique-minimizers}
   Let $(C_*,\chainbasis{*}=\sigma_1<\dots<\sigma_l)$ be a based chain complex with an elementwise filtration, and 
   let $z$ be lexicographically minimal cycle with respect to the basis $\chainbasis{*}$.
   If $c\in Z_n$ is homologous to $z$ and~$c\sqsubseteq z$ with respect to $\chainbasis{*}$, then we must have~$c=z$.
  \end{proposition}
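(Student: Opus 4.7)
The plan is to split on whether the supports of $c$ and $z$ coincide. Since the lexicographic order on subsets of $\chainbasis{n}$ is a total order, the hypothesis $c\sqsubseteq z$ gives either $\supp{\chainbasis{*}}{c}\prec \supp{\chainbasis{*}}{z}$ strictly or $\supp{\chainbasis{*}}{c}=\supp{\chainbasis{*}}{z}$. In the first case $c\sqsubset z$ strictly, and $c$ is a homologous cycle, which directly contradicts the minimality of $z$. So the real content is the case $\supp{\chainbasis{*}}{c}=\supp{\chainbasis{*}}{z}$, where I must rule out the possibility that $c$ and $z$ differ only in their coefficients.

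For this substantive case I would argue by contradiction. Assume $c\neq z$ and set $d=c-z$, a nonzero boundary supported inside $\supp{\chainbasis{*}}{z}$: for any basis element $\tau\notin\supp{\chainbasis{*}}{z}$ we have $\IP{c}{\tau}=\IP{z}{\tau}=0$ (using $\supp{\chainbasis{*}}{c}=\supp{\chainbasis{*}}{z}$), so $\IP{d}{\tau}=0$. Let $\sigma=\chainpivot{\chainbasis{*}}{d}$ be the lexicographic pivot of $d$; then $\sigma\in \supp{\chainbasis{*}}{z}$. I would then form the modified cycle
\[ z' = z - \frac{\IP{z}{\sigma}}{\IP{d}{\sigma}}\cdot d, \]
which is homologous to $z$ since $d$ is a boundary, and whose coefficient at $\sigma$ vanishes by construction.

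The crux is then the support comparison. For any $\tau>\sigma$ one has $\tau\notin \supp{\chainbasis{*}}{d}$, so $z$ and $z'$ agree at $\tau$; at $\tau=\sigma$ the coefficient of $z'$ is zero while $\sigma\in \supp{\chainbasis{*}}{z}$. Hence the maximum of the symmetric difference of $\supp{\chainbasis{*}}{z}$ and $\supp{\chainbasis{*}}{z'}$ is exactly $\sigma$, and it lies in $\supp{\chainbasis{*}}{z}\setminus\supp{\chainbasis{*}}{z'}$. By the definition of the lexicographic order on subsets, this gives $\supp{\chainbasis{*}}{z'}\prec \supp{\chainbasis{*}}{z}$ strictly, so $z'\sqsubset z$, contradicting the minimality of $z$.

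There is no serious obstacle here: the argument is a single Gaussian-elimination step, and the only care needed is the bookkeeping of the symmetric difference of supports. An alternative route would be to first observe that $c$ must itself be lexicographically minimal (any strict reducer of $c$ would, since $\supp{\chainbasis{*}}{c}=\supp{\chainbasis{*}}{z}$, also strictly reduce $z$) and then invoke the uniqueness clause of \cref{phiinv-equ-nogradientfacets} for the flow of the $n$-reduction gradient, noting that $c-z$ lies in $\spann{\gradbound{n}{V}}=B_n$ since the $n$-reduction gradient generates the $n$-boundaries. The direct elimination argument above, however, is shorter and avoids reference to the flow machinery.
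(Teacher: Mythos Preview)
Your direct elimination argument is correct. The case split is sound: if $\supp{\chainbasis{*}}{c}\prec\supp{\chainbasis{*}}{z}$ you contradict minimality immediately, and in the equal-support case your construction of $z'=z-\tfrac{\IP{z}{\sigma}}{\IP{d}{\sigma}}d$ does exactly what you claim, since $\sigma=\chainpivot{\chainbasis{*}}{d}$ guarantees that $z'$ agrees with $z$ above $\sigma$ while killing the coefficient at $\sigma\in\supp{\chainbasis{*}}{z}$.

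Your approach is genuinely different from the paper's. The paper argues via the flow machinery: it takes the $n$-reduction gradient $V$ on the basis $\Pi_*$, observes that $\Pi_*$ and $\chainbasis{*}$ induce the same lexicographic preorder on $C_n$, deduces (implicitly using the fact that any strict reducer of $c$ would also strictly reduce $z$) that both $z$ and $c$ are lexicographically minimal, invokes \cref{flowinv-equ-lexmin} to conclude both are $\Phi$-invariant, and then applies the uniqueness clause of \cref{phiinv-equ-nogradientfacets} together with the fact that $V$ generates the $n$-boundaries. This is essentially the ``alternative route'' you sketch at the end. What the paper's approach buys is a unified picture tying the statement into the algebraic Morse framework already developed; what your direct argument buys is a self-contained proof that needs nothing beyond the definitions of support and lexicographic order, and in particular does not depend on the existence of a reduction $R=D\cdot\VMatrix$ or on \cref{phiinv-equ-nogradientfacets}. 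Your proof is shorter and more elementary.
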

  
\begin{proof}%
  Let $R=D\cdot \VMatrix$ be a reduction of the filtration boundary matrix, and let $V$ be the $n$-reduction gradient on the $n$-reduction basis $\Pi_*$.
  Recall that $V$ is  $n$-reduced and generates the $n$-boundaries by construction.
  As mentioned before, the ordered bases $\Pi_*$ and $\chainbasis{*}$ induce the same lexicographic preorder on $C_n$, and thus, both cycles $z$ and $c$ are also lexicographically minimal with respect to the basis $\Pi_*$.
  By \cref{flowinv-equ-lexmin}, this implies that both cycles are invariant under the flow determined by $V$.
  As $V$ generates the $n$-boundaries, for $z$ and $c$ being homologous is the same as being $V$-homologous, and hence \cref{phiinv-equ-nogradientfacets} implies $c=z$.
  \end{proof}

  Finally, we give a proof of \cref{lexmin-iff-nobirthspx} using these results.
  \begin{proof}[{Proof of \cref{lexmin-iff-nobirthspx}}]
    By \cref{reduction-deathcomp}, we know that there exists a death-compatible reduction matrix $\VMatrix$.
Thus, by \cref{invpivotflow-equ-lexminsimplicial}, a cycle $z$ is lexicographically minimal with respect to the elementwise filtration if and only if $z$ is invariant under the reduction flow.
As in the proof of \cref{invpivotflow-equ-lexminsimplicial}, this is equivalent to $z$ being invariant under the
    flow of the $n$-reduction gradient.
    This in turn is, by \cref{phiinv-equ-nogradientfacets}, equivalent to $z$ not containing any gradient facets of the $n$-reduction gradient.
Finally, by the construction of the $n$-reduction gradient, this is equivalent to $z$ not containing any (non-essential) birth elements, meaning that it only contains death elements and essential elements.
  \end{proof}

\subsection{Algebraic flow as matrix reduction}
\label{appendix-algflow-matrixred}

 We demonstrate how the algebraic flow on a cycle can be interpreted as a variant of Gaussian elimination, tying it closely to the exhaustive reduction introduced in~\cref{prelim-pers-hom}.
 
 Let $(C_*,\chainbasis{*}=\sigma_1<\dots<\sigma_l)$ be a based chain complex with an elementwise filtration, and let $V$ be an algebraic gradient on $\chainbasis{*}$.
 
 \begin{figure}[h!]
 \begin{algorithm}[H]
 \caption{Gradient flow reduction}\label{alg:gradient-flow}
 \KwIn{$D=\partial$ the $l\times l$ filtration boundary matrix, $c$ a cycle}
     \For{$i=1,\dots,l$}
     {
         \If{$c_i\neq 0$ and $(\sigma_i,\sigma_j)\in V$ a gradient pair}{
         $\mu=-c_i/D_{i,j}$\;
         $c=c + \mu\cdot D_j$\;
         }
     }
   \Return $c$
 \end{algorithm}
 \end{figure}
 
 \goodbreak 

 \begin{proposition}
 \label{reduction-computes-flow}
 If $V$ is $n$-reduced and $c\in Z_n$ a cycle, then \cref{alg:gradient-flow} computes $\Phi(c)$.
 \end{proposition}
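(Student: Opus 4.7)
The plan is to match the chain returned by Algorithm \ref{alg:gradient-flow} against the closed-form expression for $\Phi(c)$ provided by Definition \ref{def-alg-flow}. Since $c$ is a cycle, $\Phi(c) = c + \partial \Flow(c)$, and writing $c = \sum_i c_i \sigma_i$ and unfolding $\Flow$ on basis elements yields
$$\Phi(c) \;=\; c \;-\; \sum_{(\sigma_i,\sigma_j)\in V,\ \sigma_i \in \chainbasis{n}} \frac{c_i}{D_{i,j}}\, D_j,$$
where $c_i$ is the coordinate of the input in the basis $\chainbasis{*}$ and $D_j$ is the coordinate vector of $\partial \sigma_j$. Each firing of the loop adds exactly one such multiple of a column $D_j$, but the coefficient it uses is computed from the \emph{running} value of the $i$-th coordinate, not from the original $c_i$. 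The proof therefore reduces to showing that, whenever index $i$ fires, the current value at position $i$ still equals the original $c_i$, and that the pairs that ever fire are exactly those with $\sigma_i \in \chainbasis{n}$.

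The main ingredient is an invariant, which I would prove by induction on $i$: at the start of iteration $i$, the running chain agrees with $c$ at position $i$ and lies entirely in $C_n$. The first half follows because any earlier firing iteration $i' < i$ adds $\mu_{i'} D_{j'}$ for a pair $(\sigma_{i'},\sigma_{j'}) \in V$, and by the $n$-reduced hypothesis $\chainpivot{\chainbasis{*}}{\partial \sigma_{j'}} = \sigma_{i'}$, so $D_{k,j'} = 0$ for all $k > i'$; in particular $D_{i,j'} = 0$, so position $i$ is untouched. The second half follows because the running chain starts in $C_n$, and an iteration only fires at indices $i$ with running entry non-zero, which by the first half of the invariant forces $\sigma_i \in \chainbasis{n}$; the added column $D_j = \partial\sigma_j$ with $\sigma_j \in \chainbasis{n+1}$ then again lies in $C_n$. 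This is the one place where $n$-reducedness is essential, and it is the main conceptual obstacle.

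Granted the invariant, at the time index $i$ fires the algorithm uses $\mu_i = -c_i / D_{i,j}$ with the original $c_i$, and does nothing when $c_i = 0$ (or when $\sigma_i \notin \chainbasis{n}$, since then the running entry is still $0$). Summing the firings across all iterations and comparing the result with the displayed closed form for $\Phi(c)$ gives a termwise match. The only subtlety is the non-interference between updates on still-to-be-processed coordinates, which is exactly what the $n$-reduced hypothesis delivers; the rest is bookkeeping.
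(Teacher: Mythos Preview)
Your proposal is correct and follows essentially the same approach as the paper: both unfold $\Phi(c)=c+\partial\Flow(c)$ into a sum of column contributions $-\tfrac{c_i}{D_{i,j}}D_j$ and use the $n$-reduced hypothesis (pivot of $D_j$ equals $i$) to argue that processing indices in increasing order leaves later coordinates untouched, so the running coefficient at step $i$ is still the original $c_i$. Your version is more explicit about the invariant and about why iterations with $\sigma_i\notin\chainbasis{n}$ never fire, points the paper compresses into a single sentence.
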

 \begin{proof}
 By definition, for any cycle $c$ the algebraic flow is given by
 $\Phi(c)=c+\partial \Flow(c)$.
 We denote by $L$ the set of pairs $(i,j)$ with $c_i\neq 0$ and $(\sigma_i,\sigma_j)\in V$.
 Recall from \cref{def-alg-flow} that $\Flow\colon C_*\to C_{*+1}$ is the linear map with $\Flow(\sigma_i)=-\IP{\partial \sigma_j}{\sigma_i}^{-1} \cdot \sigma_j$ if $(\sigma_i,\sigma_j)\in V$ and $0$ on all other basis elements.
 Note that $\partial\sigma_j$ is represented by the column $D_j$ and $\IP{\partial \sigma_j}{\sigma_i}=D_{i,j}$.
Thus, we have
 \[\partial \Flow(c)=\sum\limits_{(i,j)\in L} c_i\cdot \partial\Flow(\sigma_i)=\sum\limits_{(i,j)\in L} (-c_i\IP{\partial \sigma_j}{\sigma_i}^{-1})\cdot \partial\sigma_j=\sum\limits_{(i,j)\in L} (-c_i/D_{i,j})\cdot D_j.\]
 By assumption of $V$ being $n$-reduced, these columns $D_j$ have distinct pivots and thus, traversing and updating the cycle $c$ from small to large index has the same effect as adding a summand from the sum above to the cycle $c$ one after the other.
 Hence, \cref{alg:gradient-flow} computes~$\Phi(c)$.
 \end{proof}
 
 To compute the image of a cycle under the stabilized flow, one can use \cref{alg:stabilized-flow}, which resembles the exhaustive reduction from \cref{prelim-pers-hom}, and that does not require any additional specific choices besides the algebraic gradient, in contrast to~\cref{alg:gradient-flow}.
 
 \begin{figure}[h!]
 \begin{algorithm}[H]
 \caption{Stabilized flow reduction}\label{alg:stabilized-flow}
 \KwIn{$D=\partial$ the filtration boundary matrix, $c$ a cycle}
     \While{ there exists $i$ with $c_i\neq 0$ and $(\sigma_i,\sigma_j)\in V$ a gradient pair}
     {
         $\mu=-c_i/D_{i,j}$\;
         $c=c + \mu\cdot D_j$\;
     }
   \Return $c$
 \end{algorithm}
 \end{figure}

 \begin{proposition}
 For any cycle $c\in Z_n$, \cref{alg:stabilized-flow} computes~$\Phi^\infty(c)$.
 \end{proposition}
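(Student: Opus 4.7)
The plan is to prove termination of the while loop and then identify the terminal value of $c$ as $\Phi^\infty(c_0)$, where $c_0$ denotes the input cycle, via the uniqueness clause of \cref{phiinv-equ-nogradientfacets}. Correctness, given termination, is essentially immediate: each update replaces $c$ by $c+\mu\cdot D_j$ for some gradient pair $(\sigma_i,\sigma_j)\in V$, so the new $c$ is still a cycle and differs from the old by a multiple of $\partial\sigma_j\in\gradbound{n}{V}$; hence every intermediate $c$ is $V$-homologous to $c_0$. The loop's exit condition is precisely that $c$ contains no gradient facets of $V$ in its support, which by \cref{phiinv-equ-nogradientfacets} is equivalent to $\Phi$-invariance. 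Since $\Phi^\infty(c_0)$ is also $\Phi$-invariant (by definition of the stabilized flow) and $V$-homologous to $c_0$ (each application of $\Phi$ adds $\partial\Flow(\cdot)\in\spann{\gradbound{n}{V}}$), the uniqueness clause of \cref{phiinv-equ-nogradientfacets} forces the output to coincide with $\Phi^\infty(c_0)$.

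For termination I would invoke the algebraic Morse function $f\colon\chainbasis{*}\to\mathbb{R}$ whose gradient is $V$ (\cref{def:alg-morse-fct}). The key structural input is that for any gradient pair $(\sigma_i,\sigma_j)\in V$, monotonicity of $f$ together with the characterizing property of algebraic Morse functions forces every facet of $\sigma_j$ distinct from $\sigma_i$ to have $f$-value strictly less than $f(\sigma_j)=f(\sigma_i)$; otherwise such a facet would have to pair with $\sigma_j$ in $V$, contradicting disjointness of the pairs. Hence an iteration that chooses index $i$ zeroes out $c_i$ and can only introduce new entries at rows whose $f$-value is strictly less than $f(\sigma_i)$. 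Associating to $c$ the tuple $N(c)=(n_{f_1},\ldots,n_{f_m})$ indexed by the finitely many $f$-values $f_1>\cdots>f_m$ attained on gradient facets, where $n_{f_k}$ counts the gradient facets $\sigma_\ell\in\supp{\chainbasis{*}}{c}$ with $f(\sigma_\ell)=f_k$, each iteration strictly decreases $N(c)$ in the lexicographic order from the top, which is well-founded on bounded nonnegative integer tuples.

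The main obstacle is termination: because the algebraic gradient is not assumed to be $n$-reduced, the pivot of $D_j$ may lie strictly above row $i$, so a direct pivot-descent argument like the one used in the proof of \cref{reduction-computes-flow} is unavailable, and in principle the support of $c$ could grow during a single iteration. The structural use of $f$ sidesteps this issue because the algebraic Morse function intrinsically orders updates by $f$-value rather than by the filtration's pivot order, which is exactly what is needed to ensure descent regardless of which gradient facet the algorithm selects.
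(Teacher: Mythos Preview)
Your argument is correct. The correctness portion (termination implies the output is $\Phi$-invariant and $V$-homologous to $c_0$, hence equals $\Phi^\infty(c_0)$ by the uniqueness clause of \cref{phiinv-equ-nogradientfacets}) is exactly the paper's argument.

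For termination, the paper takes a slightly different route: it observes that the algorithm is insensitive to the choice of elementwise filtration, invokes \cref{sorting-implies-nreduced} to reorder the basis so that $V$ becomes $n$-reduced, and then remarks that the algorithm is ``essentially the same as \cref{alg:exhaustive},'' which terminates. Your direct argument via the Morse function $f$ and the lexicographically ordered tuple $N(c)$ is a more explicit unpacking of the same underlying mechanism: after the reordering of \cref{sorting-implies-nreduced}, the index of a gradient facet plays precisely the role that the $f$-value plays in your argument (every facet of a gradient cofacet other than its paired facet has strictly smaller index/$f$-value). Your version has the advantage of being self-contained and not relying on the somewhat informal comparison with exhaustive reduction; the paper's version is more concise but leans on the reader to fill in why the analogy with \cref{alg:exhaustive} yields termination.
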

 \begin{proof}
 Note that we can choose a different elementwise filtration by the $\sigma_i$ without crucially affecting \cref{alg:stabilized-flow}.
Moreover, by \cref{sorting-implies-nreduced}, we can assume that $V$ is $n$-reduced.
 Thus, the algorithm is essentially the same as \cref{alg:exhaustive}, implying that it terminates.
 
Now note that \cref{alg:stabilized-flow} computes a $V$-homologous cycle $c'$ of $c$ that contains no gradient facets, and thus it is $\Phi$-invariant by \cref{phiinv-equ-nogradientfacets}.
 By definition, the stabilized cycle $\Phi^\infty(c)$ is also a $\Phi$-invariant cycle and $V$-homologous to $c$.
 Therefore, both $c'$ and $\Phi^\infty(c)$ are $\Phi$-invariant cycles that are $V$-homologous.
 \Cref{phiinv-equ-nogradientfacets} now implies $c'=\Phi^\infty(c)$.
 \end{proof}

  \section{Reduction chains and descending complexes}
  \label{appendix-reduction-chains}

  For a generalized discrete Morse function $f\colon K\to \mathbb{R}$, we know from \cref{fullyreduced-supported-descending} that the columns of the totally reduced filtration boundary matrix corresponding to non-zero persistence pairs are supported on the descending complex. 
  In this section, we show that the reduction chains, i.e., the chains given as the columns of the reduction matrix, that correspond to essential and non-zero persistence pairs are also supported on the descending~complex.

  Let $D$ be the filtration boundary matrix of the simplexwise filtration of $K$ induced by the $f$-lexicographic order, and let $\VMatrix$ be a death-compatible reduction matrix with $R=D\cdot \VMatrix$ totally reduced.
  
  \goodbreak 
\begin{proposition}
  \label{reductioncolumn-ondescending}
  Let $f\colon K\to \mathbb{R}$ be a generalized discrete Morse function and let $\tau\in K$ be a 
  critical simplex, i.e., a simplex that is either essential or contained in a non-zero persistence pair of the simplexwise filtration induced by the $f$-lexicographic order. Let $r=f(\tau)$ be the function value of $\tau$.
  Then the chain given as the column $\VMatrix_\tau$ of the reduction matrix is supported on the descending complex $\descpxf{f}{r}$.
\end{proposition}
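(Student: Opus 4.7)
The plan is to mirror the argument used for \cref{inv-chain-supp-on-desc-cpx}, but applied to the possibly non-cycle chain $\VMatrix_\tau$ in place of a lexicographically minimal cycle. I will show that $\VMatrix_\tau$ is invariant under the algebraic flow $\Phi\colon C_*(\sublevel{f}{r})\to C_*(\sublevel{f}{r})$ determined on the standard basis of simplices by the restriction $W_r$ to $\sublevel{f}{r}$ of the zero persistence apparent pairs gradient $W$ of $K$, which is a regular refinement of $V$ by \cref{apparent-equal-gradient}. Once $\Phi(\VMatrix_\tau)=\VMatrix_\tau$ is established, \cref{flowinf-supported-descending} places $\VMatrix_\tau$ in $C_*(\descpxV{W_r})=C_*(\descpx{W}{f}{r})$, and \cref{refinement-implies-nested-descendingcpxV} then yields the desired inclusion $\descpx{W}{f}{r}\subseteq\descpxf{f}{r}$.

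That $\VMatrix_\tau\in C_{\dim\tau}(\sublevel{f}{r})$ is immediate from homogeneity and upper triangularity of $\VMatrix$ together with the $f$-lexicographic order. To prove $\Phi$-invariance, I would use the identity $\Phi(\VMatrix_\tau)=\VMatrix_\tau+\partial \Flow(\VMatrix_\tau)+\Flow(R_\tau)$ and reduce to showing that no $W_r$-gradient facet appears in the support of either $\VMatrix_\tau$ or $R_\tau$, so that both $\Flow(\VMatrix_\tau)$ and $\Flow(R_\tau)$ vanish.

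For $\VMatrix_\tau$, I plan to combine death-compatibility with the observation that the reduction algorithms of \cref{prelim-pers-hom} only ever add death columns to any other column, so every non-diagonal entry of $\VMatrix_\tau$ is a death simplex of the full filtration. Such a death simplex is either $W$-critical (when its persistence pair is non-zero) or a $W$-gradient cofacet (when the pair is a zero persistence apparent pair, hence contained in $W$); and the diagonal entry $\tau$ is $V$-critical, hence $W$-critical. Thus no $W_r$-gradient facet lies in the support of $\VMatrix_\tau$. For $R_\tau$, the essential case is trivial since $R_\tau=0$, and in the non-zero death case the argument in the proof of \cref{fullyreduced-supported-descending} identifies $R_\tau$ as the lexicographically minimal cycle of $[\partial\tau]$ in $\sublevelopen{f}{r}$, so \cref{lexmin-iff-nobirthspx} forces its support to consist only of death and essential elements of that smaller filtration. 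A hypothetical $W_r$-gradient facet $\mu$ in the support of $R_\tau$ would be the lower part of a zero persistence apparent pair $(\mu,\mu')$ of $K$ with both endpoints in $\sublevelopen{f}{r}$; this pair would restrict to a zero persistence apparent pair in the filtration of $\sublevelopen{f}{r}$, making $\mu$ a birth simplex there and contradicting minimality.

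I expect the main obstacle to be the transfer of the apparent pair structure from the filtration of $K$ to the smaller filtration of $\sublevelopen{f}{r}$ in the analysis of the support of $R_\tau$. This rests on monotonicity of $f$, which guarantees that all facets of a simplex in $\sublevelopen{f}{r}$ also lie in $\sublevelopen{f}{r}$, together with the simple fact that the minimum of a cofacet set restricted to a subcomplex coincides with the ambient minimum whenever the latter lies in the subcomplex.
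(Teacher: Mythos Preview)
Your proposal is correct and follows essentially the same strategy as the paper: show that $\VMatrix_\tau$ is $\Phi$-invariant for the flow of the zero persistence apparent pairs gradient $W_r$ on $\sublevel{f}{r}$, then invoke \cref{flowinf-supported-descending} and \cref{refinement-implies-nested-descendingcpxV}. Your treatment of $\Flow(\VMatrix_\tau)=0$ via death-compatibility matches the paper exactly.

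The one substantive difference is in the death case for $\Flow(R_\tau)=0$. The paper argues directly: since $R$ is totally reduced, the column $R_\tau$ contains no entry at a row that is the pivot of some earlier column; every zero persistence apparent facet $a$ with $(a,b)\in W_r$ is such a pivot (with $b<\tau$ because $f(b)=f(a)\leq f(\sigma)<f(\tau)$), so $R_\tau$ contains no $W_r$-gradient facets. Your route through \cref{fullyreduced-supported-descending}, \cref{lexmin-iff-nobirthspx}, and the restriction of apparent pairs to $\sublevelopen{f}{r}$ reaches the same conclusion but is considerably more roundabout; the direct argument from total reduction is both shorter and avoids the need to verify that apparent pairs restrict. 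One small omission: you should also explicitly cover the case where $\tau$ is the \emph{birth} simplex of a non-zero persistence pair (not just essential or death); there again $R_\tau=0$, so the argument is immediate, and the paper handles this by observing that such $\tau$ is essential in the smaller filtration of $\sublevel{f}{r}$.
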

\begin{proof}
 Note that if $\tau$ is an essential simplex or a birth simplex contained in a non-zero persistence pair, then it is an essential simplex of the (smaller) simplexwise filtration of $\sublevel{f}{r}$ induced by the $f$-lexicographic order.
Let $W_r$ be the zero persistence apparent pairs gradient of this simplexwise filtration and denote by $\Phi\colon C_*(\sublevel{f}{r})\to C_*(\sublevel{f}{r})$, with $\Phi(c)=c+\Flow(\partial c)+\partial \Flow(c)$, the associated flow.

Assume first that $\tau$ is an essential simplex of the simplexwise filtration of $\sublevel{f}{r}$.
Then, the chain $c$ given as the column $\VMatrix_\tau$ of the reduction matrix is a cycle, i.e., $\partial c=0$.
Moreover, since $\VMatrix$ is death-compatible, we also have $\Flow(c)=0$.
Hence, we get 
\[\Phi(c)=c+\Flow(\partial c)+\partial \Flow(c)=c+0+0=c\]
and the chain~$c$ is $\Phi$-invariant.
Analogously as in the proof of \cref{inv-chain-supp-on-desc-cpx}, we conclude that $c$ is supported on the descending complex $\descpxf{f}{r}$.

Now assume that $\tau$ is a death simplex of the simplexwise filtration of $\sublevel{f}{r}$ that is contained in a non-zero persistence pair $(\sigma,\tau)$, implying that $\tau$ is not contained in a zero-persistence apparent pair.
By \cref{def:app-pairs}, and as the matrix $R$ is totally reduced, the column $R_\tau=D\cdot \VMatrix_\tau$ does not contain any apparent facet of the simplexwise filtration. %
Therefore, the chain $c$ given as the column $\VMatrix_\tau$ of the reduction matrix satisfies $\Flow(\partial c)=0$.
As before, since $\VMatrix$ is death-compatible, we also have $\Flow(c)=0$.
Hence, we get 
\[\Phi(c)=c+\Flow(\partial c)+\partial \Flow(c)=c+0+0=c\] 
and the chain~$c$ is $\Phi$-invariant.
Analogously to before, we conclude that $c$ is supported on the descending complex $\descpxf{f}{r}$, proving the claim.
\end{proof}

\begin{figure}[t]
  \centering
  \includegraphics[width=.2\linewidth]{discrete_vector_field.pdf}
  \caption{Discrete gradient (blue) together with the unique critical simplex (orange).}
  \label{fig:discrete-gradient}
\end{figure}
\begin{remark}
  Note that the assumption in \cref{fullyreduced-supported-descending,reductioncolumn-ondescending} on the pair $(\sigma,\tau)$ to be a non-zero persistence pair can not be dropped.
Consider, for example, the simplicial complex in \cref{fig:discrete-gradient} and the discrete Morse function $f\colon K\to\mathbb{R}$ that assigns to the simplices $\sigma$ and $\tau$ the value $3$ and to all other simplices the value as indicated.
  Then the pair $(\sigma,\tau)$ is a zero persistence apparent pair, but neither $S_\tau=\tau$ nor $S_\sigma=R_\tau=\partial\tau$ is supported on the descending complex $\descpxf{f}{3}$, which only consists of the orange vertex with function value~$0$.
\end{remark}

\end{document}